\definecolor{mycolor}{rgb}{0.122, 0.435, 0.698}
\DeclareOldFontCommand{\rm}{\normalfont\rmfamily}{\mathrm}
\DeclareOldFontCommand{\sf}{\normalfont\sffamily}{\mathsf}
\DeclareOldFontCommand{\tt}{\normalfont\ttfamily}{\mathtt}
\DeclareOldFontCommand{\bf}{\normalfont\bfseries}{\mathbf}
\DeclareOldFontCommand{\it}{\normalfont\itshape}{\mathit}
\DeclareOldFontCommand{\sl}{\normalfont\slshape}{\@nomath\sl}
\DeclareOldFontCommand{\sc}{\normalfont\scshape}{\@nomath\sc}
\setlist{nolistsep} %
\newlist{asslist}{enumerate}{1} 
\setlist[asslist]{label=(\roman*), ref=\thethmT(\roman*)}
\newlist{asslisttief}{enumerate}{1} 
\setlist[asslisttief]{label=(\roman*), ref=\thethmlistT(\roman*)}
\newlist{thmlist}{enumerate}{1} 
\setlist[thmlist]{label=(\alph*), ref=\thethmT(\alph*)}
\definecolor{ocre_old}{RGB}{243,102,25} 
\definecolor{ocre}{rgb}{0.122, 0.435, 0.698}
\newtheoremstyle{ocrenumbox}
{0pt}
{0pt}
{\sl}
{}
{\small\bf\sffamily\color{ocre}}
{\;}
{0.25em}
{\small\sffamily\color{ocre}\thmname{#1}\nobreakspace\thmnumber{\@ifnotempty{#1}{}\@upn{#2}}
	\thmnote{\nobreakspace\the\thm@notefont\sffamily\bfseries\color{black}---\nobreakspace#3.}} 
\newtheoremstyle{ocrenumhypbox}
{0pt}
{0pt}
{}
{}
{\small\bf\sffamily\color{ocre}}
{\;}
{0.25em}
{\small\sffamily\color{ocre}\thmname{#1}\nobreakspace\thmnumber{\@ifnotempty{#1}{}\@upn{#2}}
	\thmnote{\nobreakspace\the\thm@notefont\sffamily\bfseries\color{black}---\nobreakspace#3.}} 
\newtheoremstyle{blacknumex}
{5pt}
{5pt}
{\sl}
{} 
{\small\bf\sffamily}
{\;}
{0.25em}
{\small\sffamily{\tiny\ensuremath{\blacksquare}}\nobreakspace\thmname{#1}\nobreakspace\thmnumber{\@ifnotempty{#1}{}\@upn{#2}}
	\thmnote{\nobreakspace\the\thm@notefont\sffamily\bfseries---\nobreakspace#3.}}
\newtheoremstyle{blacknumbox} 
{0pt}
{0pt}
{\normalfont}
{}
{\small\bf\sffamily}
{\;}
{0.25em}
{\small\sffamily\thmname{#1}\nobreakspace\thmnumber{\@ifnotempty{#1}{}\@upn{#2}}
	\thmnote{\nobreakspace\the\thm@notefont\sffamily\bfseries---\nobreakspace#3.}}
\newtheoremstyle{ocrenum}
{5pt}
{5pt}
{\sl}
{}
{\small\bf\sffamily\color{ocre}}
{\;}
{0.25em}
{\small\sffamily\color{ocre}\thmname{#1}\nobreakspace\thmnumber{\@ifnotempty{#1}{}\@upn{#2}}
	\thmnote{\nobreakspace\the\thm@notefont\sffamily\bfseries\color{black}---\nobreakspace#3.}} 
\theoremstyle{ocrenumbox}
\newtheorem{thmT}{Theorem}[section]
\newtheorem{theoT}{Theorem}
\newtheorem{theoremeT}[thmT]{Theorem}
\newtheorem{lemT}[thmT]{Lemma}
\theoremstyle{ocrenumhypbox}
\newtheorem{hypT}[thmT]{Hypothesis}
\theoremstyle{blacknumex}
\theoremstyle{blacknumbox}
\newtheorem{definitionT}[thmT]{Definition}
\newtheorem{notationT}[thmT]{Notation}
\theoremstyle{ocrenum}
\newtheorem{propT}[thmT]{Proposition}
\newtheorem{corollaryT}[thmT]{Corollary}
\newmdenv[skipabove=7pt,
skipbelow=7pt,
backgroundcolor=black!5,
linecolor=ocre,
innerleftmargin=5pt,
innerrightmargin=5pt,
innertopmargin=5pt,
leftmargin=0cm,
rightmargin=0cm,
innerbottommargin=5pt]{tBox}
\newmdenv[skipabove=7pt,
skipbelow=7pt,
rightline=false,
leftline=true,
topline=false,
bottomline=false,
backgroundcolor=ocre!10,
linecolor=ocre,
innerleftmargin=5pt,
innerrightmargin=5pt,
innertopmargin=5pt,
innerbottommargin=5pt,
leftmargin=0cm,
rightmargin=0cm,
linewidth=4pt]{eBox}	
\newmdenv[skipabove=7pt,
skipbelow=7pt,
rightline=false,
leftline=true,
topline=false,
bottomline=false,
linecolor=ocre,
innerleftmargin=5pt,
innerrightmargin=5pt,
innertopmargin=0pt,
leftmargin=0cm,
rightmargin=0cm,
linewidth=4pt,
innerbottommargin=0pt]{dBox}	
\newmdenv[skipabove=7pt,
skipbelow=7pt,
rightline=false,
leftline=true,
topline=false,
bottomline=false,
linecolor=gray,
backgroundcolor=black!5,
innerleftmargin=5pt,
innerrightmargin=5pt,
innertopmargin=5pt,
leftmargin=0cm,
rightmargin=0cm,
linewidth=4pt,
innerbottommargin=5pt]{cBox}
\newenvironment{theo}{\begin{tBox}\begin{theoT}}{\end{theoT}\end{tBox}}
\newenvironment{lem}{\begin{dBox}\begin{lemT}}{\end{lemT}\end{dBox}}	
\newenvironment{prop}{\begin{dBox}\begin{propT}}{\end{propT}\end{dBox}}
\newenvironment{cor}{\begin{dBox}\begin{corollaryT}}{\end{corollaryT}\end{dBox}}	
\renewcommand{\@seccntformat}[1]{\llap{\textcolor{ocre}{\csname the#1\endcsname}\hspace{1em}}} 
\renewcommand{\section}{\@startsection{section}{1}{\z@}
	{-4ex \@plus -1ex \@minus -.4ex}
	{1ex \@plus.2ex }
	{\normalfont\large \bf \color{ocre}}}
\renewcommand{\subsection}{\@startsection {subsection}{2}{\z@}
	{-3ex \@plus -0.1ex \@minus -.4ex}
	{0.5ex \@plus.2ex }
	{\normalfont\large\bf\color{ocre} }}
\newcommand{\ul}{{\underline {l}}}
\def\titlerunning#1{\gdef\titrun{#1}}
\def\author#1{\gdef\autrun{\def\and{\unskip, }#1}\gdef\@author{#1}}
\def\address#1{{\def\and{\\\hspace*{18pt}}\renewcommand{\thefootnote}{}%
		\footnote {#1}}%
	\markboth{Cabanes-Sp\"ath }{\titrun}}
\def\email#1{e-mail: #1}
\def\subjclass#1{{\renewcommand{\thefootnote}{}%
		\footnote{\emph{Mathematics Subject Classification (2010):} #1}}}
\newcommand{\inv}{^{-1}}
\theoremstyle{definition}
\newtheorem{rem}[thmT]{Remark}
\theoremstyle{plain}
\theoremstyle{definition}
\numberwithin{equation}{section}
\numberwithin{table}{section}
\newcommand{\Id}{\operatorname {Id}}
\newcommand{\wc}{\widecheck}
\newcommand{\bC}{{\mathbf C}}
\newcommand{\bT}{{\mathbf T}}
\newcommand{\bB}{{\mathbf B}}
\newcommand{\bG}{{{\mathbf G}}}
\newcommand{\bH}{{{\mathbf H}}}
\newcommand{\bHnull}{{\bH_0}}
\newcommand{\HF}{{{\bH}^F}}
\newcommand{\Irr}{\mathrm{Irr}}
\newcommand{\SL}{\operatorname{SL}}
\newcommand{\GL}{\operatorname{GL}}
\newcommand{\SO}{\operatorname{SO}}
\newcommand{\ZZ}{\ensuremath{\mathbb{Z}}}
\newcommand{\ov}{\overline }
\newcommand{\hh}{\mathbf h }
\newcommand{\Cent}{\ensuremath{{\rm{C}}}}
\newcommand{\NNN}{\ensuremath{{\mathrm{N}}}}
\newcommand{\ZZZ}{\ensuremath{{\mathrm{Z}}}}
\newcommand{\Sym}{{\mathcal{S}}}
\newcommand{\uE}{{\underline E}}
\newcommand{\HE}{{\bH\underline E}}
\def\restr#1|#2{\left.#1\right\rceil_{#2}}
\def\II#1@#2{\index{#1@$#2$}{{\color{ocre}#2}}}
\newcommand{\calC}{\mathcal C}
\newcommand{\al}{{\alpha}}
\newcommand{\eps}{{\epsilon}}
\newcommand{\spannh}{\spann<h_0>}
\newcommand{\FF}{{\mathbb{F}}}
\newcommand{\si}{\ensuremath{\sigma}}
\newcommand{\GF}{{{\bG^F}}}
\newcommand{\HFnull}{{{{\bH}^{F_0}}}}
\def\Set#1{\Set@h#1@}
\def\Lset#1{\Lset@h#1@}
\def\Set@h#1|#2@{\left\{\left.#1\vphantom{#2}\hskip.1em\,\right|\,\relax #2\right\}}
\def\Lset@h#1@{\left\{#1\right\}}
\def\CALC#1{\CALC@h#1@}
\def\CALC@h#1|#2@{\calC^{#1}(#2)}
\def\CALCrad#1{\CALCrad@h#1@}
\def\CALCrad@h#1|#2@{\calC_\radic^{#1}(#2)}
\def\CALCNC#1{\CALCNC@h#1@}
\def\CALCNC@h#1|#2@{\calC_{\radic,nc}^{#1}(#2)}
\def\restr#1|#2{\left.#1\right\rceil_{#2}}
\def\spann<#1>{\left\langle#1\right\rangle}
\def\spa#1{\left\langle#1\right\rangle}
\def\Spann<#1>{\Spann@h#1@}
\def\Spann@h#1|#2@{\left\langle\left.#1\vphantom{#2}\hskip.1em\right.\mid\relax #2 \right\rangle}
\def\Set#1{\Set@h#1@}
\def\Set@h#1|#2@{\left\{\left.#1\vphantom{#2}\hskip.1em\,\right.
	\mid\relax #2\right\}}
\def\set#1{\set@h#1@}
\def\set@h#1@{\left\{#1\right\}}
\def\spann<#1>{\left\langle#1\right\rangle}
\newcommand{\AHs}{\mathrm{A}_\bH(s)}
\newcommand{\Out}{\ensuremath{\mathrm{Out}}}
\newcommand{\Z}{\operatorname Z}
\newcommand{\pr}{\operatorname{pr}}
\renewcommand{\O}{\operatorname{O}}
\newcommand{\ra}{\rightarrow}
\newcommand{\lra}{\longrightarrow}
\newcommand{\cC}{\mathcal C}
\newcommand{\wrt}{{with respect to\ }}
\titlerunning{On semisimple classes and component groups in type ${\mathsf{D}}$ }
\title{On semisimple classes and component groups in type ${\mathsf{D}}$\\
	{\small \textit{Dedicated to Pham Huu Tiep on the occasion of his 60th birthday} }}
\author{Marc Cabanes \and Britta Sp\"ath  \thanks{
		Some of this research was conducted in the framework of the research training group
		\emph{GRK 2240: Algebro-Geometric Methods in Algebra, Arithmetic and Topology}, funded by the DFG. 
		The authors would like to thank the Isaac Newton Institute for Mathematical Sciences for support during the programme 
		\textit{ Groups, Representations and Applications}, when work on this paper was undertaken. The Institute is supported by EPSRC Grant Number EP/R014604/1.}
}
\begin{document} 
	\maketitle
	
	\abstract{In adjoint simple algebraic groups $\bH$ of type $\mathsf{D}$ we show that for every semisimple element $s$, its centralizer splits over its identity component, i.e. $\Cent_{\bH}(s)=\Cent_{\bH}(s)^\circ\rtimes \wc A $ for some complement $\wc A$ with strong stability properties. We derive several consequences about the action of automorphisms on semisimple conjugacy classes. This helps to parametrize characters of the finite groups $\mathsf{D}_{l,\text{sc}}(q)$ and $^2\mathsf{D}_{l,\text{sc}}(q)$ and describe the action of automorphisms on them. It is also a contribution to the final proof of the McKay conjecture for the prime 3, see \cite{TypeD2}{}.}
	\address{M.C.: CNRS, Institut de Math\'ematiques Jussieu-Paris Rive Gauche, Place Aur\'elie Nemours, 75013 Paris, France.
		\email{cabanes@imj-prg.fr}\\
		\indent	\ \ \ \ B. S.: School of Mathematics and Natural Sciences
		University of Wuppertal, Gau\ss str. 20, 42119 Wuppertal, Germany, \email{bspaeth@uni-wuppertal.de}}

	\subjclass{ 20D06 (20C33 20C15 20G40)}

	\section*{Introduction}

	One of the many outstanding ideas in Deligne-Lusztig theory was to point out the relevance of a dual group $\bG^*$ to the study of representations of a finite group of Lie type $G=\bG^F$ (see for instance \cite[Ch. 11, 12]{DiMi}). In particular the set $\Irr(G)$ of irreducible (complex) characters is partitioned according to semisimple elements of $G^*=\bG^*{}^F$ up to $G^*$-conjugacy (``rational series") or $\bG^*$-conjugacy (``geometric series"). This explains how the study of $\Irr(G)$, for instance as an $\Out(G)$-set, may lead to purely group-theoretic questions about $G^*$ and its semisimple classes.

	 Together with Lusztig's Jordan decomposition of characters, rational series help to study characters in terms of unipotent characters of centralizers of semisimple elements $\Cent_{G^*}(s)$. It is a well known fact that the rational classes in a given geometric class are parametrized via the component group $A_{}(s):=\Cent_{\bG^*}(s)/\Cent_{\bG^*}^\circ(s)$ of the algebraic centralizer $\Cent_{\bG^*}(s)$. The various lifts in $\Cent_{\bG^*}(s)$ of a fixed element of $A_{}(s)$ provide different rational structures on the centralizer $\Cent_{\bG^*}(s)$ and different finite groups whose unipotent characters are then in correspondence with the characters of a given rational series. The results presented here are applied in \cite{TypeD2} to establish a Jordan decomposition compatible with automorphisms.
	
	The study of $\Irr(\GF)$ is probably most difficult when $\bG$ is a simply connected simple group and its type is $\mathsf{D}$. The difficulties come from the graph automorphism and a possibly non-cyclic fundamental group. However the dual group $\bG^*$ in that type, denoted here as $\bH$, has a couple of remarkable features with regard to semisimple classes and their centralizers. In particular we single out, for $s$ a semisimple element of $\bH$, a semidirect decomposition $$\Cent_{\bH}(s)=\Cent_{\bH}^\circ(s)\rtimes \wc A$$ where the complement $\wc A$ has a strong stability with regard to Frobenius endomorphisms and graph automorphisms.
	In \cite{TypeD2}, the elements of $\wc A$ are crucial to parametrize the different rational structures on $\Cent_{\bG^*}(s)$ and the action of Out$(\bG^F)$ on rational series of characters. This then also contributes to the verification of McKay's conjecture for the prime 3 in \cite{TypeD2}.

In type different from $\mathsf{D}$ our questions in representation theory have been solved by simpler methods not requiring this study of adjoint groups.
	It could be interesting however to prove results similar to Theorem A below in other types, at least classical. See \cite[3.2.1]{EngVers} for a property a bit weaker than the semidirect product structure.
	Theorem~A also shows that Condition (*) introduced in \cite[Def. 2]{DM21} is satisfied for the groups $\Cent_{\bG^*}(s)$ and all semisimple elements $s$, whenever $\bG$ is of type $\mathsf{D}$, thus establishing the first (minor) part of \cite[Conjecture 7]{DM21}. 
	
\noindent	{\bf Acknowledgements:} We thank Gunter Malle and the anonymous referees for remarks that helped improving the clarity of the paper. We also thank Alexei Galt for pointing the reference \cite{G17}.
	
	\section{Notations and main theorems}
	
	Let $\bH_0$ be a reductive group over the algebraically closed field $\FF$ of positive characteristic $p$. Assume $\bH_0$ is simple simply connected. Let $ \bB_0\geq \bT_0$ some Borel subgroup and maximal torus of $\bH_0$. We denote by $X(\bT_0)\supseteq \Phi\supseteq \Delta$ the associated roots and basis of the root system. 
	Let $x_\alpha\colon \FF\to \bH_0$ be the unipotent 1-parameter subgroup associated with $\alpha\in \Phi$. We set $F_p\colon \bH_0\to \bH_0$ defined by $F_p(x_\alpha(t))=x_\alpha(t^p)$ for any $\alpha\in\Phi$ and $t\in \FF$. Let $\Gamma$ be the group of algebraic automorphisms of $\bH_0$ that satisfy $x_{\eps\delta}(t)\mapsto x_{\eps\delta '}(t)$ for any $t\in\FF$, $\eps\in \{1,-1\}$, $\delta\in \Delta $ and where $\delta\mapsto\delta '$ is an automorphism of the Dynkin diagram. Note that $\Gamma$ is cyclic of order  2 whenever $\bH_0$ is of type $\mathsf{A}_l$ ($l\geq 2$), $\mathsf{D}_l$ ($l\geq 5$) or $\mathsf{E}_6$, dihedral of order 6 for type $\mathsf{D}_4$ and trivial for other types. Let $E :=\spann<F_p>\Gamma$ be the group of abstract automorphisms of $\bH_0$ generated by $\Gamma$ and $F_p$.
	
	Let $\bH:=\bH_0/\Z(\bH_0)$ and $$\pi\colon \bH_0\to\bH$$ the quotient map. Then $E$ clearly acts on $\bH$ and
	we can form the semidirect product $\bH\rtimes E $. 
	
	Let $s$ be a semisimple element of $\bH$. Then $\Cent_{ \bH}(s)$ is reductive but may not be connected. The component group $A_\bH(s):=\Cent_{ \bH}(s)/\Cent_{ \bH}^\circ(s)$ is abelian and can be seen as a subgroup of $Z(\bH_0)$ via the homomorphism $$\omega_s\colon \Cent_{ \bH}(s)\to \Z(\bH_0) \text{  defined by  }g\mapsto [\dot{g},\dot{s}]\text{ for }\dot{g}\in\pi\inv(g),\ \dot{s}\in\pi\inv(s)$$ (see \cite[8.2]{Cedric}). 
	
	In type $\mathsf{D}$, we denote by $$\uE =\spann<F_p,  \gamma>$$  the subgroup of $E$ generated by $F_p$ and some $\gamma\in\Gamma$ of order 2 (essentially in the case of type $\mathsf{D}_4$ a choice is made and the graph automorphism of order 3 is left out). 
	Note that $\uE$ acts on both $\bH_0$ and $\bH$. We constantly consider the semidirect product $\bH\rtimes \uE=\bH\uE$. For $\sigma\in \HE\ \setminus\ \bH$ and $h\in\bH$ we occasionally write $\sigma(h)$ for $\sigma h\sigma\inv\in \bH$.
	We also define the submonoid $$\uE^+:=\{F_p^i\circ \gamma^j\mid i\geq 1,\ j\in\{0,1\}\}$$ whose elements can be considered as Frobenius endomorphisms $F\colon \bH\to\bH$ with fixed points forming the finite group $\bH^F:=\{h\in \bH\mid F(h)=h  \}$. For $x\in\bH^F$ ($F\in\uE^+$) we denote by $[x]_\HF$ its conjugacy class in $\HF$.
	
	In the following theorems, $\bH=\mathsf{D}_{l,\mathrm{ad}}(\FF)$ and $\bH_0=\mathsf{D}_{l,\mathrm{sc}}(\FF)$ are groups of rank $l\geq 4$ over an algebraically closed field $\FF$ of characteristic $p\neq  2$. The odd characteristic ensures that $\mathsf{D}_{l,\mathrm{ad}}(\FF)$ is actually the quotient $\bH_0/\Z(\bH_0)$ (see \cite[2.4.4]{DM21}).
	
	\begin{theo} \label{ThA} Let $F_0\in\uE^+$, $s\in \bH^{F_0}$ a semisimple element. Then there is a semidirect product decomposition  $$\Cent_\bH(s)=\Cent_\bH^\circ(s)\rtimes \wc A$$ with $F_0(\wc A)=\wc A$. 
		
		Moreover,
		if $\si '\in \uE$, $F:=F_0^k$ for some $k\geq 1$ and ${\si'}(s)\in [s]_\HF$, then there exists $\si \in \bH^F\si'\cap \Cent_{\bH\uE}(s)$ such that $\si(\wc A)=\wc A$ and $[{F_0,\sigma}]\in \wc A$. 
		
	\end{theo}
	
	The considerations to prove the following are more classical, using only the simply connected covering $\pi\colon\bH_0\to\bH$.
	
	\begin{theo} \label{ThB}
		Let $\calC$ be the $\bH$-conjugacy class of a semisimple element $t\in\bH$ such that $|A_\bH(t)|=4$. Let $F_0\in \uE^+$ and assume $F_0(\cC)=\gamma (\cC)=\cC$. Then $\pi(\bH_0^{F_0})\cap \calC\neq \emptyset$ and there exists $s\in \pi(\bH_0^{F_0})\cap \calC$ such that $\gamma(s)\in [s]_{\bH^{F_0}}$.
	\end{theo}

	Some more technical corollaries will be given in our last section.

	\section{Lifting of component groups and automorphisms}\label{ssec2C}
	
	We continue with the same notations about $\bH$, the adjoint group of type $\mathsf{D}_l$ ($l\geq 4$) over an algebraically closed field $\FF$ of odd characteristic, its maximal torus $\bT=\pi(\bT_0)$ and the abstract automorphism group $\uE$.

	\begin{prop}\label{propcheckW}
		There exists an $\uE$-stable subgroup $\wc W\leq \bH$ such that $\NNN_\bH(\bT)=\bT\rtimes \wc W$. 
\end{prop}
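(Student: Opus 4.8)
The plan is to construct $\wc W$ explicitly, by realizing $\bH$ as a quotient of a split special orthogonal group. Since $p\neq 2$, the adjoint group $\bH=\bH_0/\Z(\bH_0)$ may be identified with $\SO_{2l}(\FF)/\{\pm 1\}$, where $\SO_{2l}(\FF)$ is the special orthogonal group of the split quadratic form $\sum_{i=1}^{l}x_ix_{i+l}$ and $\{\pm 1\}$ is its centre, which lies in the diagonal maximal torus $T=\bigl\{\mathrm{diag}(t_1,\dots,t_l,t_1^{-1},\dots,t_l^{-1})\bigr\}$; write $\pi'\colon\SO_{2l}(\FF)\to\bH$ for the quotient map, arranged so that $\bT=\pi'(T)$, and note that under this identification the Frobenius $F_p$ is induced by the entrywise $p$-th power while the graph automorphism $\gamma$ will be realized below by a conjugation. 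Inside $\SO_{2l}(\FF)$ I would single out the finite subgroup $W_0$ of all matrices with entries in $\{0,1\}$ preserving the partition of the standard basis into the pairs $\{e_i,e_{i+l}\}$ --- equivalently, the pairs $(\varepsilon,\sigma)$ with $\sigma\in\Sym_l$ permuting these pairs and $\varepsilon\in\{\pm1\}^l$ recording which pairs are ``flipped'', subject to $\prod_i\varepsilon_i=1$ so that the determinant is $1$. The first half of the argument is that $W_0$ is the desired complement already in $\SO_{2l}(\FF)$; the second half transports it to $\bH$ and checks $\uE$-stability.

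For the splitting over $T$: a direct check shows $W_0$ normalizes $T$, that $W_0\cap T=\{1\}$ (a diagonal $\{0,1\}$-matrix is the identity), and --- since $|W_0|=2^{l-1}l!=|W(\mathsf{D}_l)|$ --- that $W_0$ maps isomorphically onto $\NNN_{\SO_{2l}(\FF)}(T)/T\cong W(\mathsf{D}_l)$, the permutation matrices realizing $\Sym_l$ and the products of two pair-flips realizing the even sign changes. Hence $\NNN_{\SO_{2l}(\FF)}(T)=T\rtimes W_0$ (this splitting is also available from \cite{G17}). Now set $\wc W:=\pi'(W_0)$. Since $-1\notin W_0$ (it is not a $\{0,1\}$-matrix), $\pi'$ restricts to an isomorphism $W_0\xrightarrow{\ \sim\ }\wc W$; and since $\pi'^{-1}(\bT)=T$ (because $\{\pm1\}\subseteq T$), the same computation gives $\wc W\cap\bT=\{1\}$ and $\NNN_\bH(\bT)=\pi'\bigl(\NNN_{\SO_{2l}(\FF)}(T)\bigr)=\bT\rtimes\wc W$, which is the claimed decomposition.

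Finally comes $\uE$-stability, the step needing real care. The entries of elements of $W_0$ lie in $\{0,1\}\subseteq\FF_p$, so $W_0$ is fixed pointwise by $F_p$ and, $\pi'$ being defined over $\FF_p$, $\wc W$ is $F_p$-stable. For $\gamma$ I would fix the Chevalley pinning of $\SO_{2l}(\FF)$ so that $\gamma$ is induced by conjugation by the orthogonal matrix $\tau$ interchanging $e_l$ and $e_{2l}$ and fixing the other basis vectors: this $\tau$ has determinant $-1$, acts on $T$ by $t_l\mapsto t_l^{-1}$, and hence interchanges two end nodes of the Dynkin diagram of $\mathsf{D}_l$ while fixing the others --- an order-$2$ diagram automorphism (for $l=4$ one takes $\gamma$ to be this one), so conjugation by $\tau$ induces the pinned automorphism $\gamma$ on $\bH$. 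But $\tau$ is itself a $\{0,1\}$-monomial matrix preserving the pair-partition, so it normalizes the group of all such matrices, of which $W_0$ is the kernel of the determinant character (a subgroup of index $2$, hence normal); therefore conjugation by $\tau$ stabilizes $W_0$, and passing to $\bH$ yields $\gamma(\wc W)=\wc W$. Thus $\wc W$ is stable under $\spann<F_p,\gamma>=\uE$. The one genuinely delicate point in the whole argument is this last compatibility --- verifying that conjugation by the explicit reflection $\tau$ agrees, as an automorphism of $\bH$, with the abstractly chosen pinned $\gamma$ (and, for $\mathsf{D}_4$, arranging the choice of $\gamma$ accordingly); everything else is routine bookkeeping with signed permutation matrices.
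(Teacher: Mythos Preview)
Your proof is correct and follows essentially the same approach as the paper's: realize $\bH$ as $\SO_{2l}(\FF)/\{\pm 1\}$, take for the complement the group of $\{0,1\}$-entry signed-permutation matrices (those with an even number of pair-flips), and realize $\gamma$ as conjugation by the single-pair flip $\tau$, noting that $\tau$ lies in the full hyperoctahedral group of which $W_0$ is a normal subgroup. The paper uses the antidiagonal form $J_{2l}$ rather than your $\sum_i x_ix_{i+l}$ and cites \cite[4.3.6]{DiMi} for the identification of conjugation by $\tau$ with the pinned $\gamma$, but the arguments are otherwise identical; if anything, you are slightly more explicit than the paper in verifying that the complement survives the quotient by $\{\pm\Id\}$.
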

\begin{proof} The statement is probably known to experts familiar with adjoint groups (see \cite[Th. 1]{G17}). We recall an elementary construction of $\wc W$ and emphasize the stability property. For $d\geq 1$ let $J_d\in\GL_d(\FF)$ be the permutation matrix corresponding to the product of transpositions $(1,d)(2,d-1)\dots\in \Sym_d$. 
		
		Let $\bG:=\SO_{2l}(\FF)=\O_{2l}(\FF)\cap\SL_{2l}(\FF)$, where $\O_{2l}(\FF):=\{x\in \GL_{2l}(\FF)\mid {}^txJ_{2l}x=J_{2l}  \}$. Then $\bG$ is connected simple of type $\mathsf{D}_l$ (see for instance \cite[1.5.5]{DiMi}) with center $\{\Id_{2l}, -\Id_{2l}\}$ and $\bH =\bG/\{\Id_{2l}, -\Id_{2l}\}$. A maximal torus of $\bG$ is the group $\ov\bT$ of diagonal matrices whose diagonal is $(t_1,\dots, t_l,t_l^{-1},\dots , t_1^{-1})$ with $t_1,\dots, t_l\in\FF^\times$; a Borel subgroup $\ov\bB$ containing it and consisting of upper diagonal matrices is also described in \cite[1.5.5]{DiMi}. Numbering the basis in dimension $2l$ by $(1,2,\dots, l,-l,-l+1,\dots ,-2,-1)$ the normalizer of $\ov\bT $ in GL$_{2l}(\FF)$ is generated by $\ov\bT $ and the permutation matrices corresponding to $(l,-l)$ and the various involutions $(i,j)(-i,-j)$ ($1\leq i,j\leq l$). All those permutation matrices clearly belong to $\O_{2l}(\FF)$ and generate a subgroup $\ov V\leq \O_{2l}(\FF)$ with $\ov V\cong \ZZ/2\ZZ\wr \Sym_l$. The intersection $\ov W:=\ov V\cap\SL_{2l}(\FF)$ corresponds to elements of the wreath product whose number of non-trivial coordinates in the base group is even, thus forming a Coxeter group of type $\mathsf{D}_l$. 
		
		In $\bG=\SO_{2l}(\FF)$ we indeed get N$_\bG(\ov\bT)=\ov\bT \rtimes \ov W$. The elements of $\ov V$ are clearly fixed by $F_p$ which here is just the raising of matrix entries to the $p$-th power. On the other hand, conjugation by the permutation matrix associated to $(l,-l)$ induces the graph automorphism of $\bG$ of order 2 associated to $\ov\bT$ and $\ov\bB$, and swapping the fundamental reflections corresponding to $(l-1,l)(-l+1,-l)$ and $(l-1,-l)(-l+1,l)$ (see \cite[4.3.6]{DiMi}). This too preserves $\ov W$ since the permutation matrix associated to $(l,-l)$ belongs to $\ov V$. We get our Proposition by taking for $\wc W$ the image of $\ov W$ in $\bH =\bG/\{\Id_{2l}, -\Id_{2l}\}$.
\end{proof}

	\begin{rem} In type $\mathsf{B}_l$ the same statement is proved similarly in $\bH =\SO_{2l+1}(\FF)$ seen as $\O_{2l+1}(\FF)/\{\Id_{2l+1}, -\Id_{2l+1}\}$ (see again \cite{G17}). In type $\mathsf{A}_{l-1}$ with $\bH=\GL_{l}(\FF)/\{ \lambda\Id_{l}\mid\lambda\in\FF^\times \}$, the permutation matrices also provide a complement whose elements are fixed under $F_p$. Note that in that type the non-trivial element of $\Gamma$ sends our group of permutation matrices to a distinct $\NNN_{\bH }(\bT)$-conjugate in odd characteristic. Note also that permutation matrices have a somewhat cumbersome expression in terms of Chevalley generators. For instance the permutation matrix for the transposition $(1,2)$ has a class mod $\Z(\GL_l(\FF))$ corresponding to $n_{\al_1}(1)h_{\al_1}(-\omega)h_{\al_2}(-\omega^2)\dots h_{\al_{l-1}}(-\omega^{l-1})\in \SL_l(\FF)$ for some $\omega$ with $\omega^l=-1$ in the notations of \cite[1.12.1]{GLS3}, a similarly complex formula being necessary in type $\mathsf{D}$.
		
		In type $\mathsf{C}_l$, or exceptional types $\neq  \mathsf{G}_2$ it is known that $\bT$ has no complement in $\NNN_{\bH }(\bT)$ (see \cite[4.11]{AH17}). 
	\end{rem}

	We now consider the centralizer of some $x\in \bT$, where $\bT$ is as before a maximal torus of an adjoint group that is maximally split for any Frobenius endomorphism belonging to $\uE^+$. 
	
	Recall that for any $x\in \bT$ the connected centralizer $\Cent_{ \bH }^\circ(x)$ is a reductive group containing $\bT$ as maximal torus (see for instance \cite[3.5.4]{Carter2}).

	\begin{prop}\label{prop_wcB}
		Let $\bT$ and $\wc W$ be as in \Cref{propcheckW}, noting that $\wc W$ acts on $\bT$ hence naturally on the set $\Phi$ of $\bT$-roots of $\bH$. Let $x\in \bT$,
		$\Phi_x\subseteq X(\bT)$ the  root system  of $\Cent_\bH^\circ (x)$ \wrt $\bT$
		and choose $\Delta_x$ a basis of $\Phi_x$.
		We set $\wc B:=\Cent_{\bH}(x)\cap \wc W_{\Delta_x}=\{w\in \wc W\mid {}^wx=x,\ w(\Delta_x)=\Delta_x  \}$ and similarly $\wc B_\uE:=\Cent_{\HE}(x)\cap{(\wc W\rtimes \uE )}_{\Delta_x}$. 
		Then 
		\begin{thmlist}
			\item  
			$\Cent_{\bH }(x)= \Cent_\bH^\circ (x) \rtimes \wc B$, $\Cent_{\bH \uE }(x)= \Cent_\bH^\circ (x) \rtimes  \wc B_\uE $ ,
			\item $\wc B\lhd \wc B_{\uE }$ with abelian quotient. 
		\end{thmlist}
	\end{prop}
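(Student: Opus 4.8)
The plan is to use the standard structure theory of (possibly disconnected) reductive centralizers together with the explicit complement $\wc W$ provided by \Cref{propcheckW}. First I would recall that $\Cent_\bH^\circ(x)$ is a connected reductive group with maximal torus $\bT$ and root system $\Phi_x\subseteq\Phi$ (as noted just before the statement), and that $\Cent_\bH(x)$ normalizes $\Cent_\bH^\circ(x)$, hence permutes its Borel subgroups containing $\bT$, equivalently acts on the set of bases of $\Phi_x$. Since $\Cent_\bH^\circ(x)$ acts simply transitively on that set of bases (via the Weyl group $W(\Phi_x)=\langle\, w\in\wc W\mid {}^wx=x\,\rangle$, lifted into $\Cent_\bH^\circ(x)$), every element of $\Cent_\bH(x)$ can be multiplied by a unique element of $\Cent_\bH^\circ(x)$ to land in the stabilizer of the chosen base $\Delta_x$. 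This gives $\Cent_\bH(x)=\Cent_\bH^\circ(x)\cdot\bigl(\Cent_\bH(x)\cap\NNN_\bH(\bT)_{\Delta_x}\bigr)$, and the factorization is a semidirect product once I check that the second factor meets $\Cent_\bH^\circ(x)$ trivially: an element of $\Cent_\bH^\circ(x)\cap\NNN_\bH(\bT)$ fixing $\Delta_x$ lies in $\bT$ (it is an element of the Weyl group $W(\Phi_x)$ fixing a base, hence trivial in $W(\Phi_x)$, so it lies in $\bT\cap\Cent_\bH^\circ(x)=\bT$).

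The key remaining point for part (a) is to replace ``$\Cent_\bH(x)\cap\NNN_\bH(\bT)_{\Delta_x}$'' by the group $\wc B=\Cent_\bH(x)\cap\wc W_{\Delta_x}$ built purely inside the complement $\wc W$. Here I would invoke \Cref{propcheckW}: $\NNN_\bH(\bT)=\bT\rtimes\wc W$, so $\NNN_\bH(\bT)_{\Delta_x}=\bigl(\bT\cap\NNN_\bH(\bT)_{\Delta_x}\bigr)\rtimes\wc W_{\Delta_x}$, and since $x\in\bT$ every element of $\bT$ centralizes $x$; but an element of $\bT$ never permutes roots nontrivially, so $\bT\cap\NNN_\bH(\bT)_{\Delta_x}$ consists of the elements of $\bT$ fixing $x$, which is a subgroup of $\Cent_\bH^\circ(x)$. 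Absorbing that subgroup into the connected part, one gets exactly $\Cent_\bH(x)=\Cent_\bH^\circ(x)\rtimes\wc B$. The $\uE$-version is obtained by running the same argument inside $\HE$ with $\NNN_{\HE}(\bT)=\bT\rtimes(\wc W\rtimes\uE)$ — this uses precisely the $\uE$-stability of $\wc W$ from \Cref{propcheckW}, which is why that stability was emphasized — giving $\Cent_{\HE}(x)=\Cent_\bH^\circ(x)\rtimes\wc B_\uE$.

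For part (b), $\wc B=\wc B_\uE\cap\bH$ is clearly normal in $\wc B_\uE$ (it is the kernel of the restriction to $\wc B_\uE$ of the projection $\HE\to\HE/\bH\cong\uE$), so the quotient $\wc B_\uE/\wc B$ embeds into $\uE$. Since $\uE=\spann<F_p,\gamma>$ is abelian (generated by the Frobenius $F_p$ and a single graph involution $\gamma$ of order $2$, commuting with $F_p$), the quotient is abelian. I do not expect a serious obstacle here; the only thing to watch is that $\wc B_\uE/\wc B$ is genuinely a subgroup of $\uE$ and not merely a subquotient, which follows because $\wc B_\uE\cap\bH=\wc B$ by definition, so the map $\wc B_\uE\hookrightarrow\wc W\rtimes\uE\to\uE$ has kernel exactly $\wc B_\uE\cap\wc W=\wc B$. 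The main subtlety in the whole argument is the simple transitivity of $\Cent_\bH^\circ(x)$ on bases of $\Phi_x$ together with the trivial-intersection check; both are standard for connected reductive groups but must be cited carefully (e.g.\ from \cite{Carter2} or \cite{DiMi}), and one must make sure the Weyl group $W(\Phi_x)$ is realized by elements of $\wc W$ fixing $x$, which is immediate from $\wc W$ being a complement to $\bT$ in $\NNN_\bH(\bT)$.
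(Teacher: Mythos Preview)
Your plan matches the paper's proof: a Frattini argument (conjugacy of maximal tori in $\Cent_\bH^\circ(x)$, then the simply transitive action of its Weyl group on bases of $\Phi_x$) reduces to $\NNN_\bH(\bT)_{\Delta_x}$, and the complement $\wc W$ from \Cref{propcheckW} lets you replace this by $\wc W_{\Delta_x}$ after absorbing $\bT$ into $\Cent_\bH^\circ(x)$; part (b) is exactly the paper's one-line observation that $\wc B_\uE/\wc B$ injects into $\uE$. Two small slips to clean up in a full write-up: an arbitrary $g\in\Cent_\bH(x)$ does not permute Borels of $\Cent_\bH^\circ(x)$ \emph{containing $\bT$} unless it already normalizes $\bT$, so the first adjustment by $\Cent_\bH^\circ(x)$ is to bring $g$ into $\NNN_\bH(\bT)$ (the paper makes this step explicit, writing $\NNN_{\bH\uE}(\bC)=\bC\cdot\NNN_{\bH\uE}(\bT,\bC)$), and your parenthetical $W(\Phi_x)=\langle w\in\wc W\mid {}^wx=x\rangle$ misidentifies $W(\Phi_x)$ with the whole of $\Cent_{\wc W}(x)$ --- the latter is strictly larger in general, the excess being precisely what gives rise to $\wc B$.
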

	\begin{proof} Note first that $\uE$ stabilizes $\bT$ and sends maximal tori of $\bH$ to maximal tori since $\si(^g\bT)={}^{\si(g)}\bT$ for any $g\in \bH$ and $\si\in\uE$. So $\bH\uE$ permutes the maximal tori of $\bH$. The connected centralizer $\bC:=\Cent_\bH^\circ (x)$ is the reductive group containing $\bT$ with root system $\Phi_x$, so 
		\begin{eqnarray*}
			\NNN_{\bH \uE }(\bC)&=&\bC.\NNN_{\bH \uE }(\bT,\bC)\ \ \text{ by conjugacy of maximal tori in $\bC$} \\
			&=&\bC.\NNN_{\wc W\uE}(\bC) \ \  \  \ \ \text{ by Proposition~\ref{propcheckW}} \\
			&=&\bC\rtimes (\wc W\uE)_{\Delta_x} \ \ \text{ by the regular action of $\NNN_{\bC }(\bT)/\bT$ on the bases of $\Phi_x$.} \end{eqnarray*}{}
		
		On the other hand $\bC$ is obviously normal in $\Cent_{ \bH \uE }(x)$, so $\Cent_{ \bH \uE }(x)$ is the centralizer of $x$ in the above $\bC\rtimes (\wc W\uE)_{\Delta_x}$. This gives the second claim of (a) with $\wc B_\uE:=\Cent_{\wc W  \uE }(x)_{\Delta_x}$. The first is clear by taking the intersection with $\bH$, which also gives the first part of (b). The quotient $\wc B_\uE/\wc B$ is abelian since it injects into $\uE$.
	\end{proof}

	\begin{rem} It could be interesting to classify those groups $\Cent_{ \bH \uE }(x)/\Cent_{ \bH }^\circ(x)$ or equivalently $\wc B_\uE$. 
		
		Using the simply connected covering $\bH_0$ we can embed $\Cent_{ \bH \uE }(x)/\Cent_{ \bH }^\circ(x)$ into $\ZZ\times V\times \ZZ/2\ZZ$ where $V$ is the dihedral group of order 8. Let us explain briefly how this can be seen.

		The simply connected covering $\bH_0\xrightarrow{\pi} \bH$ has kernel $\ZZZ(\bH_0)$ of order 4 while the covering $\bH_0\to \SO_{2l}(\FF)$ has a kernel $\{1,h_0\}$ of order 2 with $F_p(h_0)=\gamma(h_0)=h_0$ (see \cite[p. 70]{GLS3} where $h_0$ is called $z_c$).  The group $\bH $ acts on $\bH_0$ by conjugacy, with $\uE$ also acting in a compatible way, so we get an action of $\bH\uE$ on $\bH_0$ and therefore an action of $\Cent_{ \bH \uE }(s)$ on $\pi\inv (s)$. Denote $\pr_1\colon \Cent_{ \bH \uE }(s)\to \Sym_{\pi\inv (s)}$ the induced group morphism with values in a permutation group on four elements. Note that the kernel of the action of $\Cent_{ \bH  }(s)$ on $\pi\inv (s)$ is $\Cent_{ \bH  }^\circ(s)$ since this is $\pi \big(\Cent_{ \bH_0 }(\pi\inv(s))\big)$ by connectedness of centralizers of semisimple elements in $\bH_0$ (see \cite[3.5.6]{Carter2}). Moreover since the action of $\Cent_{ \bH \uE }(s)$ on $\pi\inv(s)$ commutes with translation by $h_0$, the image of $\pr_1$ is included in the centralizer of a product of two disjoint transpositions in the symmetric group on four elements. The latter is a dihedral group $V$ of order 8. 
		
		 Letting $\pr_2\colon \bH\rtimes \uE\to\uE$ be the projection on the second term of the semidirect product, we get that the kernel of the map $\Cent_{ \bH \uE }(s)\to V \times \uE$ defined by $c\mapsto (\pr_1(c),\pr_2(c))$ is $\Cent^\circ_{ \bH  }(s)$. Restricting this morphism to  $\wc B_{\uE}$ then makes it injective thanks to \Cref{prop_wcB}(a).
		\end{rem}
		 

Our applications to centralizers of semisimple elements and semisimple conjugacy classes will stem from the omnibus lemma below. 

\begin{lem}\label{omni} Let $F_0\in\uE^+$. Let $s\in\bH^{F_0}$ be a semisimple element and let $x\in \bT$ be an $\bH$-conjugate of $s$. Let $\Cent_{ \bH \uE }(x)=\Cent_{\bH }^\circ(x)\rtimes \wc B_\uE$ the decomposition of Proposition~\ref{prop_wcB}. Then there is an inner automorphism $\iota\colon\bH\uE\to \bH\uE$ induced by an element of $\bH$ such that   \begin{thmlist}
		\item $\iota(s)=x$,\item $\iota(F_0) \in \wc B_\uE$ and \item if $F=F_0^i$ for some $i\geq 1$, $\tau\in \uE$ and $h\in\bH^F$ are such that $\tau(s)=s^h$, then $$\iota(h\tau) \in  \Cent_{\bH }^\circ(x)\Cent_{\wc B_\uE}(\iota(F)).$$
		\end{thmlist}
\end{lem}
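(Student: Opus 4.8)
The plan is to take $\iota$ to be conjugation inside $\bH\uE$ by an element $g\in\bH$ built in two stages: a first choice gives (a), a Lang--Steinberg argument upgrades it to give (b), and then (c) is forced by uniqueness of the decomposition $\Cent_{\bH}^\circ(x)\rtimes\wc B_\uE$. First I would fix $g_0\in\bH$ with ${}^{g_0}s=x$ (possible since $x$ and $s$ are $\bH$-conjugate) and let $\iota_0$ denote conjugation by $g_0$ on $\bH\uE$. For any $y\in\bH$ one has $\iota_0(F_0)\,y\,\iota_0(F_0)\inv={}^{g_0}\!\bigl(F_0({}^{g_0\inv}y)\bigr)$, so for $y=x$ this equals ${}^{g_0}\!\bigl(F_0(s)\bigr)={}^{g_0}s=x$ and hence $\iota_0(F_0)\in\Cent_{\bH\uE}(x)$. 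Using \Cref{prop_wcB}(a) write $\iota_0(F_0)=c\,b$ with $c\in\Cent_\bH^\circ(x)$ and $b\in\wc B_\uE$; note $\pr_2(b)=\pr_2(\iota_0(F_0))=F_0$ is nontrivial. Since $s\in\bH^{F_0}$, the connected reductive group $\Cent_\bH^\circ(s)$ is $F_0$-stable, so $F_0$ restricts to a Steinberg endomorphism of it; conjugating by $g_0$ shows $\iota_0(F_0)$ restricts to a Steinberg endomorphism of $\Cent_\bH^\circ(x)={}^{g_0}\Cent_\bH^\circ(s)$. As $c\in\Cent_\bH^\circ(x)$, this restriction equals $\iota_c\circ\phi_b$, where $\iota_c$ is conjugation by $c$ on $\Cent_\bH^\circ(x)$ and $\phi_b$ is the action of $b$ on $\Cent_\bH^\circ(x)$; therefore $\phi_b$ is itself a Steinberg endomorphism of $\Cent_\bH^\circ(x)$.

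Next I would apply the Lang--Steinberg theorem to the connected group $\Cent_\bH^\circ(x)$ and the endomorphism $\phi_b$ to obtain $c_1\in\Cent_\bH^\circ(x)$ with $c_1\inv\phi_b(c_1)=c$. Set $g:=c_1g_0$ and let $\iota$ be conjugation by $g$, so $\iota=\iota_{c_1}\circ\iota_0$. Since $c_1$ centralizes $x$ we get $\iota(s)=\iota_{c_1}(x)=x$, which is (a); and
\[\iota(F_0)=\iota_{c_1}(c\,b)=c_1\,c\,b\,c_1\inv=\bigl(c_1\,c\,\phi_b(c_1)\inv\bigr)\,b=b\in\wc B_\uE\]
because $c_1\,c\,\phi_b(c_1)\inv=1$ by the choice of $c_1$, which is (b).

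For (c), let $F=F_0^i$ ($i\geq 1$), $\tau\in\uE$ and $h\in\bH^F$ with $\tau(s)=s^h$. Then $h\tau$ centralizes $s$, since $(h\tau)\,s\,(h\tau)\inv={}^h\!\bigl(\tau(s)\bigr)={}^h(h\inv sh)=s$; hence $\iota(h\tau)\in\Cent_{\bH\uE}(x)$, say $\iota(h\tau)=c'b'$ with $c'\in\Cent_\bH^\circ(x)$ and $b'\in\wc B_\uE$. Moreover $h\tau$ commutes with $F$: the factor $h$ does because $h\in\bH^F$, and $\tau$ commutes with $F=F_0^i$ because $\uE$ is abelian (being generated by the commuting $F_p$ and $\gamma$). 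Hence $\iota(h\tau)$ commutes with $\iota(F)=\iota(F_0)^i=b^i\in\wc B_\uE$. Comparing the two sides of $c'b'\cdot b^i=b^i\cdot c'b'$ inside $\Cent_\bH^\circ(x)\rtimes\wc B_\uE$, the $\wc B_\uE$-components give $b'b^i=b^ib'$, i.e. $b'\in\Cent_{\wc B_\uE}(\iota(F))$. Therefore $\iota(h\tau)=c'b'\in\Cent_\bH^\circ(x)\,\Cent_{\wc B_\uE}(\iota(F))$, which is (c).

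The genuinely delicate point is getting $\iota(F_0)$ \emph{exactly} into the complement $\wc B_\uE$ rather than merely into $\Cent_{\bH\uE}(x)$; this is what the Lang--Steinberg step buys, and it is essential, since it is precisely $\iota(F_0)\in\wc B_\uE$ that forces $\iota(F)=\iota(F_0)^i$ to lie in the $\wc B_\uE$-factor, which in turn is what the uniqueness argument in (c) relies on. The rest is routine bookkeeping in the semidirect product $\Cent_\bH^\circ(x)\rtimes\wc B_\uE$, which I would not spell out in detail.
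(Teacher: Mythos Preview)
Your proof is correct and follows essentially the same approach as the paper: conjugate $s$ to $x$, then use a Lang--Steinberg argument inside $\Cent_\bH^\circ(x)$ to adjust the conjugating element so that $F_0$ lands exactly in $\wc B_\uE$, and finally read off (c) from the semidirect product structure. The only cosmetic difference is that the paper applies Lang's theorem directly to the endomorphism $\iota'(F_0)$ of $\Cent_\bH^\circ(x)$ to solve $c^{-1}=c'^{-1}\iota'(F_0)c'\iota'(F_0)^{-1}$, whereas you first strip off the inner factor $c$ and apply Lang--Steinberg to $\phi_b$ alone; since the two endomorphisms differ by the inner automorphism $\iota_c$ this is the same computation in slightly different bookkeeping.
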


	\begin{proof} We denote $x:=s^g$ for some $g\in \bH$ and $\iota':\bH  \uE  \lra \bH  \uE $ given by $y\mapsto y^g$. Note that $\iota '$ preserves cosets $\bH e$ ($e\in \HE$). We have 
		 $\iota'({F_0})\in \bH {F_0}\cap \Cent_{\bH \uE}(x)$ since $s$ is $F_0$-invariant. 
		 Recalling $\wc B_\uE $ the group associated to $x$ from \Cref{prop_wcB} and the decomposition $ \Cent_{\bH \uE}(x)= \Cent^\circ _\bH(x)\wc B_\uE$ we can write $\iota'({F_0})$ as $cb$ with $c\in \Cent^\circ_\bH(x)$ and $b \in \wc B_\uE $. Lang's theorem (see for instance \cite[4.1.2]{Ge}) applied to $\Cent^\circ_\bH(x)$ and $\iota '({F_0})$ allows us to write $c\inv =c'{}\inv \iota '({F_0}){c'}\iota '({F_0})\inv$ in $\bH\uE$ for some $c'\in \Cent^\circ_\bH(x)$. Calling now $\iota: \bH  \uE \lra \bH  \uE$ the isomorphism given by  $y\mapsto y^{gc'}$, we get $$b=c\inv \iota '({F_0})=c'{}\inv \iota '({F_0}){c'}=\iota '({F_0})^{c'}=\iota({F_0})$$ and $\iota(s)=s^{gc'}=x^{c'}=x$.
		 
		 It remains to check (c).
		We have $\tau\in\uE$ and $h\in \bH^{F }$ such that $h\tau \in \Cent_{\bH\uE}(s)$. Then $\iota(h\tau)\in \Cent_{\bH\uE}(\iota(s)) = \Cent_{\bH\uE}(x)= \Cent^\circ_{\bH }(x)\rtimes \wc B_\uE$ by \Cref{prop_wcB}(a). So $$\iota(h\tau)\in \Cent^\circ_{\bH }(x)b'$$ for some $b'\in \wc B_\uE$.  
		 Note that $h \tau $ commutes with $F $ and accordingly $\iota(h \tau )$ commutes with $\iota(F )$. Because of the semidirect product structure $\Cent_{\bH\uE}(x)=\Cent_\bH^\circ(x) \rtimes \wc B_\uE$ and since $\iota(F )=\iota(F_0)^i\in \wc B_\uE$ thanks to (b), we see that $b'$ commutes with $\iota(F )$ as well. This gives our point (c).
	\end{proof}

	\begin{proof}[Proof of \Cref{ThA}]
	By Lemma~\ref{omni}, we have an inner automorphism $\iota$ of $\bH\uE$ such that $x:=\iota (s)\in \bT$, and $\Cent_{ \bH \uE }(x)=\Cent_{ \bH  }^\circ(x)\rtimes \wc B_\uE$ with $\iota(F_0)\in\wc B_\uE$. We define $\wc A:=\iota\inv(\wc B )$, recalling $\wc B =\wc B_\uE\cap \bH$ from \Cref{prop_wcB}. It is $F_0$-stable because $\wc B_\uE\cap \bH$ is stable under $\iota(F_0)$ since the latter belongs to $\wc B_\uE$. 
	
	We now consider some $\sigma '\in\uE$ such that $\sigma '(s)\in [s]_{\bH^{F}}$ for some $F=F_0^k$, $k\geq 1$. Let $h\in\HF$ such that $\sigma '(s)=s^h$. By \Cref{omni}(c) applied to $\tau:=\sigma '$, we get some $b\in\Cent_{\wc B_\uE}(\iota(F))$ such that $\iota(h\tau) \in \Cent_{\bH }^\circ(x)b$. We show that $\si:=\iota^{-1}(b)$ satisfies our claims. First
$\si$ centralizes $s$ since $b$ centralizes $x$. Also $\si$ belongs to $\bH\tau$ since 
$\iota$ and its inverse, being conjugations, are the identity on $\bH\uE/\bH$ and therefore $\bH \si =\bH b=\bH\tau $. Moreover $\si$ commutes with $F$ since $b$ commutes with $\iota(F)$, so $\si\in (\bH\tau)^F=\HF\tau$. The group $\wc A$ is $\si$-stable by the same argument used for $F_0$, namely $b\in\wc B_\uE$ and therefore $\iota\inv(b)$ normalizes $\wc A=\iota\inv(\wc B)$. That $b\in\wc B_\uE$ also implies that $[\iota(F_0),b]\in \wc B$ since $\wc B_\uE/\wc B$ is abelian by Proposition~\ref{prop_wcB}(b). This in turn tells us that $[F_0,\si]\in\wc A$, thus completing the proof of Theorem A. 
\end{proof}

In the next section, Theorem A will be applied mostly with $\sigma '$ being a graph automorphism. But here is a case reminiscent of
Shintani descent with two commuting Frobenius endomorphisms.

\begin{cor}\label{cor2.6}
Let $F,F_0\in \uE^+$, let $\cC$ be a semisimple conjugacy class of $\bH$ and assume we have $s_0\in \cC\cap\bH^{F_0}$  such that $F(s_0)\in [s_0]_\HFnull$. Applying \Cref{ThA} with $(F_0,s,\si ',k)$ being here $(F_0,s_0,F,1)$, we get a group $\wc A$ and some $\si\in\bH\uE$ that we denote as $\wc A(s_0)$ and $F'$.
 Then there exist $s\in \cC^F$, $F_0'\in \HF F_0$ centralizing $s$, and a group $\wc A(s)$ such that 
\begin{asslist}
\item $\Cent_\bH(s)=\Cent_\bH^\circ (s) \rtimes \wc A(s)$;
\item $\wc A(s)$ is $\spa{F,F_0'}$-stable; and there exists an inner isomorphism  $\iota_{s_0,s}: \bH \underline E \ra  \bH \underline E
$ such that 
\begin{itemize}
	\item $\iota_{s_0,s}(s_0)=s$\ \ and\ \ $\iota_{s_0,s}(\wc A(s_0))=\wc A(s)$;
	\item  $\iota_{s_0,s}(F_0)=F_0'$ \ and \ $\iota_{s_0,s}(F')=F$.
\end{itemize}
\end{asslist}
\end{cor}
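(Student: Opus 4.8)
The plan is to apply Theorem~A once and then transport the entire picture to a point of $\cC^F$ via an inner automorphism, using Lang's theorem inside $\Cent_\bH^\circ$ of that point. First I would invoke Theorem~A with the data $(F_0,s,\si',k)=(F_0,s_0,F,1)$; the hypothesis $F(s_0)\in[s_0]_\HFnull$ is exactly the condition ``$\si'(s)\in[s]_\HF$'' of Theorem~A in the case $\si'=F$, so we obtain a group $\wc A=\wc A(s_0)$ with $F_0(\wc A(s_0))=\wc A(s_0)$ and an element $\si=:F'\in\bH^{F_0}F\cap\Cent_{\bH\uE}(s_0)$ with $F'(\wc A(s_0))=\wc A(s_0)$ and $[F_0,F']\in\wc A(s_0)$. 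Since $F'\in\bH F_0^{0}\cdot\,\dots$, more precisely $F'\in\HFnull F$, we have $F'\in\uE^+$ up to the inner factor, and $F'$ centralizes $s_0$; write $F'=h_0F$ with $h_0\in\HFnull$.

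Next I would produce the point $s\in\cC^F$. Because $F'$ centralizes $s_0$ and $F'=h_0 F$ with $h_0\in\bH^{F_0}$, a standard Lang-type argument inside $\Cent_\bH^\circ(s_0)$ (note this connected group is $F'$-stable, $F'$ being a Frobenius endomorphism on $\bH$ conjugate to $F$) lets us write $h_0\inv = c\inv\,{}^{F'\!}c$ for some $c\in\Cent_\bH^\circ(s_0)$; then one checks directly that $s:=s_0^{\,c}$ is fixed by $F=h_0\inv F'$ —indeed $F(s)=F(s_0^c)=(F's_0)^{F^{-1}? }\dots$— so $s\in\cC\cap\bH^F=\cC^F$, and simultaneously $F_0':={}^{c^{-1}}\!F_0^{?}$, i.e. $F_0'$ is the conjugate of $F_0$ by $c$, lies in $\HF F_0$ and centralizes $s$ because $[F_0,F']\in\wc A(s_0)\le\Cent_\bH(s_0)$ transports correctly. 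Here I would define $\iota_{s_0,s}:\bH\uE\to\bH\uE$ to be conjugation by $c$ and set $\wc A(s):=\iota_{s_0,s}(\wc A(s_0))$. Then (i) follows since conjugation carries $\Cent_\bH(s_0)=\Cent_\bH^\circ(s_0)\rtimes\wc A(s_0)$ to $\Cent_\bH(s)=\Cent_\bH^\circ(s)\rtimes\wc A(s)$; the bullet-point properties $\iota_{s_0,s}(s_0)=s$, $\iota_{s_0,s}(\wc A(s_0))=\wc A(s)$, $\iota_{s_0,s}(F_0)=F_0'$, $\iota_{s_0,s}(F')=F$ are then immediate from the definitions, remembering that a conjugation is the identity on $\bH\uE/\bH$ so that $F_0'\in\HF F_0$ and $F\in\bH F$ indeed. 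Finally $\wc A(s)$ is $\spa{F,F_0'}$-stable because $\wc A(s_0)$ is $\spa{F',F_0}$-stable (from Theorem~A: $F_0(\wc A(s_0))=\wc A(s_0)$ and $F'(\wc A(s_0))=\wc A(s_0)$) and $\iota_{s_0,s}$ conjugates $\spa{F',F_0}$ to $\spa{F,F_0'}$.

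The main obstacle I anticipate is getting the two Frobenius endomorphisms to behave coherently under the single conjugation $\iota_{s_0,s}$: one wants $\iota_{s_0,s}$ to send $F_0$ into $\HF F_0$ \emph{and} $F'$ exactly to $F$, and for $F_0'$ to still centralize $s$. The correct choice of $c$ must come from applying Lang's theorem to the element $h_0$ defining $F'=h_0F$ rather than to something involving $F_0$; one then has to verify that conjugating by this $c$ sends $F'$ to $F$ (a one-line computation: ${}^c(h_0F)={}^c h_0\cdot{}^cF=\dots=F$ using $h_0={}^{F'\!}c\cdot c\inv$), while $F_0'={}^cF_0$ automatically centralizes $s={}^cs_0$ since $F_0$ centralizes $s_0$. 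The subtle point, which I would check carefully, is that $c$ can be chosen in $\Cent_\bH^\circ(s_0)$ (so that $\wc A(s_0)$ and the decomposition are respected) — this is exactly where $F'$-stability of $\Cent_\bH^\circ(s_0)$ and Lang's theorem for that connected reductive group are used, mirroring the argument already run in the proof of Lemma~\ref{omni}.
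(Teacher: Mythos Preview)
Your overall strategy coincides with the paper's: extract $\wc A(s_0)$ and $F'$ from Theorem~A, then conjugate $F'$ to $F$ by an inner automorphism of $\bH\uE$ and transport everything along. The specific Lang step you propose, however, does not work.

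You want to solve $h_0^{-1}=c^{-1}\,F'(c)$ with $c\in\Cent_\bH^\circ(s_0)$, but $h_0$ does not lie in that group: from $F'(s_0)=s_0$ and $F'=h_0F$ one obtains $h_0\,F(s_0)\,h_0^{-1}=s_0$, so $h_0$ centralizes $s_0$ only when $F(s_0)=s_0$, i.e.\ in the trivial case. Worse, if $c$ did centralize $s_0$ then $s:=s_0^{\,c}=s_0\notin\cC^F$ in general. So the ``subtle point'' you isolate at the end is exactly where the argument breaks; the analogy with the Lang step in Lemma~\ref{omni} fails because there the element to be trivialised genuinely lies in $\Cent_\bH^\circ(x)$, whereas your $h_0$ does not lie in $\Cent_\bH^\circ(s_0)$.

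The remedy, and this is what the paper does, is to apply Lang's theorem in all of $\bH$ to find $c\in\bH$ conjugating $F'$ to $F$. Conjugation by an arbitrary element of $\bH$ carries the decomposition $\Cent_\bH(s_0)=\Cent_\bH^\circ(s_0)\rtimes\wc A(s_0)$ to one for $s$, so there is no need for $c$ to centralize $s_0$; your worry about ``respecting $\wc A(s_0)$'' is unfounded. What then really has to be checked is $F_0'\in\bH^F F_0$, i.e.\ $[F_0',F]=1$, equivalently $[F_0,F']=1$. Your recorded conclusion ``$[F_0,F']\in\wc A(s_0)$'' is too weak for this. In fact Theorem~A with $k=1$ already gives $F'\in\bH^{F_0}F$, and since $F_0,F\in\uE$ commute this forces $[F_0,F']=1$ outright. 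The paper obtains the same commutation by passing through the reference point $x\in\bT$ and the group $\wc B_\uE$ of Proposition~\ref{prop_wcB}, where it reads $[f_0,f]=1$ with $f_0=\iota(F_0)$ and $f=\iota(F')$, and then sets $\iota_{s_0,s}=\iota'\circ\iota$ with $\iota'$ chosen (via Lang in $\bH$) so that $\iota'(f)=F$.
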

\begin{proof} 
Let us recall how Theorem A is proved by application of \Cref{omni}. Choosing $x\in \bT\cap \cC$, we have $\Cent_{ \bH  }(x)=\Cent_{ \bH }^\circ (x)\rtimes \wc B$ from  \Cref{prop_wcB}. \Cref{omni} gives us an inner automorphism $\iota\colon \bH\uE\to\bH\uE$ with $\iota(s_0)=x$, $\iota(F_0):=f_0\in\wc B$, and we define $\wc A(s_0):=\iota\inv(\wc B)$. 

Having $F(s_0)\in[s_0]_\HFnull$ implies that ${[hF,s_0]}=1$ for some $h\in\HFnull$, so that $\iota(hF)\in\Cent_{ \bH  }^\circ(x)f$ with $f\in \Cent_{\wc B_\uE}(f_0)$ thanks to \Cref{omni}(c) for $i=1$ and $\tau=F$. Following the proof of Theorem A, we define $F':=\iota\inv(f)\in\Cent_{ \bH^{F_0}F  }(s_0) $ and get $F'(\wc A(s_0))=\wc A(s_0)$.

Recalling that $\iota$ is an inner automorphism and $\bH\uE/\bH$ is abelian, we have $\bH f=\bH F'=\bH F$ and Lang's theorem ensures the existence of some inner automorphism $\iota '$ of $\bH\uE$ induced by an element of $\bH$ such that $\iota'(f)=F$. We define $\iota_{s_0,s}:=\iota'\iota$, $F'_0:=\iota_{s_0,s}(F_0)=\iota'(f_0)$, $s:=\iota_{s_0,s}(s_0)=\iota'(x)$. The latter belongs to $\cC^F$ since $\iota$ and $\iota'$ are induced by elements of $\bH$ and $ [F,s]=\iota_{s_0,s}([\iota\inv(f),s_0])=1$. We have $\iota_{s_0,s}(F')=\iota'(f)=F$. We get $\Cent_{ \bH  }(s)=\Cent_{ \bH  }^\circ(s)\rtimes\wc A(s)$ for $\wc A(s):=\iota_{s_0,s}(\wc A(s_0))=\iota'(\wc B)$ as a consequence of the same property of $\wc A(s_0)$ for $s_0$. Moreover $\wc A(s)$ is $\spa{F,F'_0}$-stable since $\wc A(s_0)$ has been seen to be $F_0$-stable and $\wc B$ is $f$-stable since $f\in\wc B_\uE$. We also have $F'_0\in\bH F_0$ while $[F'_0,F]=\iota'([f_0, f])=1   $ as seen above. So $F'_0\in\bH^F F_0$ as claimed. We now have all our claims.
\end{proof}
	\section{Proof of Theorem B and some corollaries}

	We prove Theorem B but also some more technical statements, that can be seen as strengthenings of Theorem A, about semisimple classes of $\bH$ and the action of elements of $\uE^+$.
	 Recall $\gamma\in\uE$ (see \S 1) the graph automorphism stabilizing our choice of a maximal torus and Borel subgroup.

	 In the following $F_0\in\uE^+$ and $\cC$ is the $\bH$-conjugacy class of a semisimple element.
	
	\begin{cor}\label{1stCase}
		Assume $F_0(\cC)=\gamma (\cC)=\cC$ and there is some $s\in\cC^{F_0}$ such that $\gamma(s) \in[s]_{\bH^{F_0}}$. Then there  is $\gamma' \in \bH^{F_0}\gamma\cap \Cent_{\bH\uE}(s)$ and a $\spa{F_0, \gamma '}$-invariant subgroup $\wc A$ with $\Cent_\bH(s)=\Cent_\bH^\circ(s)\rtimes \wc A$.
	\end{cor}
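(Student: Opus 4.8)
The plan is to deduce \Cref{1stCase} directly from \Cref{ThA}, with $\si'$ taken to be $\gamma$. As a preliminary remark I would note that the class-level hypotheses are automatic from the element-level ones: since $\cC$ is a semisimple $\bH$-conjugacy class and $s\in\cC^{F_0}$, the element $s$ is semisimple and $F_0$-fixed, whence $F_0(\cC)=\cC$; and $\gamma(s)\in[s]_{\bH^{F_0}}\subseteq\cC$ forces $\gamma(\cC)=\cC$. These statements are retained in the formulation only for coherence with \Cref{ThB} and the later case distinction; the substantive input is just that $s\in\bH^{F_0}$ is semisimple with $\gamma(s)\in[s]_{\bH^{F_0}}$.

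I would then carry out two steps. First, apply the opening assertion of \Cref{ThA} to $F_0$ and $s$ to get a subgroup $\wc A\leq\Cent_\bH(s)$ with $\Cent_\bH(s)=\Cent_\bH^\circ(s)\rtimes\wc A$ and $F_0(\wc A)=\wc A$. Second, invoke the ``moreover'' clause of \Cref{ThA} with $\si':=\gamma$ and $k:=1$, so that $F=F_0$; its hypothesis $\si'(s)\in[s]_{\bH^{F}}$ is precisely the standing assumption $\gamma(s)\in[s]_{\bH^{F_0}}$, and the conclusion produces $\si\in\bH^{F_0}\gamma\cap\Cent_{\bH\uE}(s)$ with $\si(\wc A)=\wc A$ (and, incidentally, $[F_0,\si]\in\wc A$, which is not needed here). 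Setting $\gamma':=\si$ yields the element $\gamma'\in\bH^{F_0}\gamma\cap\Cent_{\bH\uE}(s)$ demanded in the statement.

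It then remains only to assemble the conclusion. The semidirect decomposition $\Cent_\bH(s)=\Cent_\bH^\circ(s)\rtimes\wc A$ is inherited from the first step. For the stability statement, $\wc A$ is stabilized by $F_0$ (first part of \Cref{ThA}) and by conjugation by $\gamma'$ (since $\si(\wc A)=\wc A$); being a subset of $\bH$ stabilized by the two elements $F_0$ and $\gamma'$ of $\bH\uE$, it is therefore stabilized by the subgroup $\spa{F_0,\gamma'}$ they generate. I do not expect any real obstacle: the corollary is simply a repackaging of \Cref{ThA}. The only points that warrant a sentence are the reduction of the class-level hypotheses to the element-level ones and the elementary observation that invariance under the generators $F_0$ and $\gamma'$ entails invariance under $\spa{F_0,\gamma'}$.
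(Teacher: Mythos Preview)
Your proposal is correct and matches the paper's own proof essentially verbatim: the paper simply says that \Cref{ThA} with $k=1$ and $\sigma'=\gamma$ gives the claims of \Cref{1stCase}. Your added remarks (redundancy of the class-level hypotheses, and passage from invariance under the generators $F_0,\gamma'$ to invariance under $\spa{F_0,\gamma'}$) are accurate elaborations that the paper leaves implicit.
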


	Recall that $\pi\colon \bH_0\to \bH$ is a composition $\bH_0\to \SO_{2l}(\FF)\to\bH$. The kernel of the second map is $\{\pm\Id_{2l}  \}$ while the kernel of the first is $\{1,h_0\}$ where $h_0\in\Z(\bH_0)$ is of order 2. 
	
	We keep $\cC$ a semisimple conjugacy class of $\bH$ and $F_0\in \uE^+$. We describe the situation complementary to the one of Corollary~\ref{1stCase}, but while the latter is a direct consequence of Theorem A, the following will require a bit more work.
	
	\begin{cor} \label{2ndCase} Assume that $F_0(\cC)=\gamma (\cC)=\cC$ but $\gamma(x) \notin[x]_{\bH^{F_0}}$ for every $x\in\cC^{F_0}$. 
		
	\noindent	 Let $F:=F_0^{2k}$ for some $k\geq 1$ and let $s\in \cC^F$.  
		 
	\noindent	Then there exist $F_0'\in\Cent_{\HF F_0}(s)$, $\gamma'\in\Cent_{\HF \gamma}(s)$ and an $\spa{F,F_0',\gamma'}$-stable group $\wc A$ such that 
		\begin{asslist}
			\item $\Cent_\bH(s)= \Cent^\circ_\bH(s)\rtimes \wc A $;
			\item $\omega_s(\wc A)=\spannh$ and $|\AHs|=|\wc A|
			=2$;
			\item $[s_0,F'_0]\in \Z(\bH_0)\setminus \spannh$  for every $s_0\in\pi\inv(s)$ and
			\item $[F_0', \gamma']=\wc a$, where $\wc a$ is the generator of $\wc A$.
		\end{asslist}
	\end{cor}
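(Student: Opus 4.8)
The plan is to apply \Cref{omni} to the Frobenius $F$ (not to $F_0$) and then to read off (i)--(iv) from the resulting semidirect decomposition together with the concrete model $\bH=\SO_{2l}(\FF)/\{\pm\Id\}$, $\pi\colon\bH_0\twoheadrightarrow\SO_{2l}(\FF)$ with kernel $\spannh$. Throughout I use that $\gamma$ commutes with every element of $\uE^+$, so $\gamma$ normalises each finite group $\bH^{F'}$ ($F'\in\uE^+$) and permutes its classes inside a $\gamma$-stable geometric class. First I would pin down the component group. It cannot be trivial: then $\cC^{F_0}$ is a single $\bH^{F_0}$-class, necessarily $\gamma$-fixed, so $\gamma(x)\in[x]_{\bH^{F_0}}$ for $x\in\cC^{F_0}$, against the hypothesis. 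It cannot have order $4$: then \Cref{ThB} (applied to $t=s$, legitimate since $F_0(\cC)=\gamma(\cC)=\cC$) yields $s'\in\pi(\bH_0^{F_0})\cap\cC$ with $\gamma(s')\in[s']_{\bH^{F_0}}$, again against the hypothesis. Since $A_\bH(s)\hookrightarrow\Z(\bH_0)$ and $|\Z(\bH_0)|=4$, we get $|A_\bH(s)|=|\wc A|=2$, which is part of (ii).

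Next, $\omega_s(\wc A)=\spannh$. Choosing $\gamma_1\in\bH\gamma$ with $\gamma_1(s)=s$ (possible since $\gamma(\cC)=\cC$), the group $\gamma_1(\wc A)$ is again a complement to $\Cent_\bH^\circ(s)$, so $\omega_s(\gamma_1(\wc A))=\omega_s(A_\bH(s))=\omega_s(\wc A)$; but $\omega_s(\gamma_1(g))=\gamma(\omega_s(g))$ by the definition of $\omega_s$ and the $\uE$-equivariance of $\pi$. Hence the order-$2$ subgroup $\omega_s(\wc A)\le\Z(\bH_0)$ is $\gamma$-stable. For $l$ odd, $\Z(\bH_0)$ is cyclic of order $4$, with $\spannh$ its unique subgroup of order $2$; for $l$ even, $\Z(\bH_0)\cong(\ZZ/2\ZZ)^2$ and $\gamma$ interchanges the kernels of the two half-spin quotients, leaving $\spannh$ as the only $\gamma$-stable subgroup of order $2$. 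So $\omega_s(\wc A)=\spannh$, completing (ii). Comparing with the standard description of $\Cent_{\SO_{2l}(\FF)}(\bar s)$ for a lift $\bar s$ of $s$, this moreover forces $\bar s\not\sim_{\SO_{2l}(\FF)}-\bar s$ (else $\omega_s$ of a lift of a conjugating element would equal $-\Id\notin\spannh$) and forces $\bar s$ to have both $1$ and $-1$ as eigenvalues (else $\Cent_{\SO_{2l}(\FF)}(\bar s)$ is connected and $|A_\bH(s)|=1$).

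Now I would construct $F_0'$, $\gamma'$, $\wc A$ and deduce (i). Since $F_0\mid F$, the set $\cC^{F_0}$ is nonempty, lies in $\cC^F$ and is $F_0$- and $\gamma$-stable; hence $F_0$ and $\gamma$ fix the $\bH^F$-class of $\cC^F$ meeting $\cC^{F_0}$, and therefore fix both $\bH^F$-classes of $\cC^F$ (there being exactly two, as $|A_\bH(s)|=2$). Thus $F_0(s),\gamma(s)\in[s]_{\bH^F}$. Applying \Cref{omni} with the Frobenius $F$ gives an inner $\iota\colon\bH\uE\to\bH\uE$ with $\iota(s)=x\in\bT\cap\cC$ and $\iota(F)\in\wc B_\uE$, and \Cref{omni}(c) with $\tau=F_0$ and $\tau=\gamma$ produces $F_0'\in\bH^F F_0\cap\Cent_{\bH\uE}(s)$ and $\gamma'\in\bH^F\gamma\cap\Cent_{\bH\uE}(s)$ whose $\iota$-images lie in $\Cent_{\wc B_\uE}(\iota(F))$. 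Putting $\wc A:=\iota\inv(\wc B)$ with $\wc B$ as in \Cref{prop_wcB}, one gets $\Cent_\bH(s)=\Cent_\bH^\circ(s)\rtimes\wc A$, which is (i), and $\wc A$ is $\spa{F,F_0',\gamma'}$-stable because $\iota(F),\iota(F_0'),\iota(\gamma')$ lie in $\wc B_\uE$ and normalise $\wc B$.

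It remains to verify (iii) and (iv). For (iii): the class of $[s_0,F_0']$ in $\Z(\bH_0)/\spannh=\Z(\SO_{2l}(\FF))$ is independent of $s_0\in\pi\inv(s)$ and of the chosen $F_0'$ (varying $F_0'$ inside its $\Cent_{\bH^F}(s)$-coset multiplies $[s_0,F_0']$ by an element of $\omega_s(\wc A)=\spannh$). If this class were trivial, $F_0'$ would fix a lift $\bar s\in\SO_{2l}(\FF)$ of $s$; conjugating $F_0'$ to $F_0$ by an element of $\bH$ (Lang) transports this to ``$F_0$ fixes a lift $\bar t\in\SO_{2l}(\FF)$ of some $t\in\cC^{F_0}$''. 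By the previous paragraph $\bar t$ has $1$ and $-1$ among its eigenvalues, so $\Cent_{\O_{2l}(\FF)}(\bar t)\not\subseteq\SO_{2l}(\FF)$; choosing an $F_0$-fixed $u\in\Cent_{\O_{2l}(\FF)}(\bar t)$ of determinant $-1$ and an $F_0$-fixed $w\in\O_{2l}(\FF)$ inducing $\gamma$, the element $wu\in\SO_{2l}(\FF)^{F_0}$ conjugates $\bar t$ to $\gamma(\bar t)$, so $\gamma(t)\in[t]_{\bH^{F_0}}$ --- a contradiction. Hence $[s_0,F_0']\in\Z(\bH_0)\setminus\spannh$, which is (iii). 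For (iv): $[F_0',\gamma']=\iota\inv\big([\iota(F_0'),\iota(\gamma')]\big)$ lies in $\wc B_\uE$ and projects to $[F_0,\gamma]=1$ in $\uE$, hence lies in $\wc B_\uE\cap\bH=\wc B$; as $\wc B\cong A_\bH(s)$ has order $2$, this means $[F_0',\gamma']\in\{1,\wc a\}$ (in particular with trivial $\Cent_\bH^\circ(s)$-part, thanks to the semidirect structure). If $[F_0',\gamma']=1$, conjugating $F_0'$ to $F_0$ by an element of $\bH$ carries $\gamma'$ and $s$ to some $\gamma''\in\bH\gamma$ and $x'\in\cC$ with $F_0(x')=x'$, $[F_0,\gamma'']=1$ and $\gamma''(x')=x'$; then $\gamma''\in\bH^{F_0}\gamma$ and $\gamma(x')\in[x']_{\bH^{F_0}}$, again contradicting the hypothesis. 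Therefore $[F_0',\gamma']=\wc a$. I expect this last step, and within it property (iii), to be the main obstacle: it is the only place where the combinatorial hypothesis has to be converted into the arithmetic statement about $[s_0,F_0']$, which forces a descent to the explicit model $\bH_0\twoheadrightarrow\SO_{2l}(\FF)$ and the eigenvalue argument in $\O_{2l}(\FF)$ versus $\SO_{2l}(\FF)$; everything else is a careful but routine use of \Cref{omni}, \Cref{prop_wcB} and Lang's theorem.
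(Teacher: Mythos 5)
There is one genuine gap, and it sits at the point on which the whole construction of $\gamma'$ (hence (i), the $\spa{F,F_0',\gamma'}$-stability, and (iv)) rests: before you can invoke \Cref{omni}(c) with $\tau=\gamma$ you must know that $\gamma(s)\in[s]_{\bH^F}$ for your \emph{arbitrary} $s\in\cC^F$, i.e.\ that $\gamma$ fixes each of the two $\bH^F$-classes in $\cC^F$ (the paper's \Cref{3.4}). Your justification --- ``$\cC^{F_0}$ is $\gamma$-stable, hence $\gamma$ fixes the $\bH^F$-class of $\cC^F$ meeting $\cC^{F_0}$, and therefore both classes'' --- tacitly assumes that $\cC^{F_0}$ meets only one of the two classes. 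From mere $\gamma$-stability of the set $\cC^{F_0}$ you cannot conclude that $\gamma$ fixes a class it meets: a priori $\gamma$ could swap the two $\bH^F$-classes while permuting $\cC^{F_0}$, which is exactly what it does, by hypothesis, with the two $\bH^{F_0}$-classes inside $\cC^{F_0}$. Note that your argument never uses that the exponent $2k$ is even; run verbatim with $F$ replaced by $F_0$ it would ``show'' that $\gamma$ fixes the $\bH^{F_0}$-classes in $\cC^{F_0}$, contradicting the hypothesis of \Cref{2ndCase}. So the step as written fails. The fact you need is true but requires the evenness: the paper gets it from $\Theta_F([x]_{\bH^F})=(h')^{2k}$ being a square in $\Z(\bH_0)$, hence $\gamma$-fixed (which in turn rests on \Cref{3.3}). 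A repair in your own style: fix $s_1\in\cC^{F_0}$ and, for $x=gs_1g^{-1}\in\cC^{F_0}$, observe that $g\inv F(g)=z\,F_0(z)\cdots F_0^{2k-1}(z)$ with $z=g\inv F_0(g)\in\Cent_\bH(s_1)$, whose image in the order-two group $A_\bH(s_1)$ is $\bar z^{2k}=1$; so $g\inv F(g)\in\Cent^\circ_\bH(s_1)$ and $x\in[s_1]_{\bH^F}$. Thus $\cC^{F_0}$ lies in a single $\bH^F$-class, and only then does your set-stability argument give that $\gamma$ (and $F_0$) fix both classes. (For $F_0$ alone the needed stability is immediate, since any class containing an $F_0$-fixed point is $F_0$-stable.)

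Apart from this step, your route is genuinely different from the paper's and essentially sound: you pin down $|A_\bH(s)|=2$ via \Cref{ThB} instead of the image of $\Theta_{F_0}$; you prove (iii) by descending to $\SO_{2l}(\FF)$ and producing an $F_0$-fixed determinant $-1$ element of $\Cent_{\O_{2l}(\FF)}(\bar t)$ together with an $F_0$-fixed $w$ inducing $\gamma$, where the paper instead reads (iii) off the fact that $\Theta_{F_0}$ has image $\Z(\bH_0)\setminus\spannh$; and you prove $[F_0',\gamma']\neq 1$ by a Lang transport back to $F_0$, which is cleaner than the paper's explicit computation with $h_0$ and $h'$. These parts I checked and they go through; once the class-stability step above is properly justified, the whole argument works.
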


	\medskip
	
	Let $\cC$ be a semisimple conjugacy class of $\bH$ and $F'\in \bH\uE^+$ (seen as an endomorphism of $\bH$) such that $F'(\cC)=\cC$. Thanks to Lang's Theorem we have $\cC^{F'}\neq\emptyset$.
	
Taking $s\in\cC^{F'}$ and recalling that $A_\bH(s)$ is abelian, we can combine the parametrization of $\bH^{F'}$-conjugacy classes in $\cC^{F'}$ by $A_\bH(s)_{F'}:=A_\bH(s)/[A_\bH(s),F']$ (see \cite[4.3.6]{Ge}) and the injection $A_\bH(s)\to\Z(\bH_0)$ induced by the map $\omega_s$ from our introduction.
	\begin{eqnarray*}
		\cC^{F'}/\bH^{F'}\text{-conj}\ \ \stackrel{\sim}{\longleftrightarrow} & A_\bH(s)_{F'}&  \\
		& A_\bH(s)&\hookrightarrow\ \  \Z(\bH_0)
	\end{eqnarray*}
	An easy calculation then yields that the resulting map  
	\begin{eqnarray*}
		\Theta_{F'}\ \colon\	\cC^{F'}/\bH^{F'}\text{-conj}& \hookrightarrow & \Z(\bH_0)_{F'} \ \ \text{ is injective and defined by } \\
		\ 	[x]_{\bH^{F'}} & \mapsto & x_0\inv F'(x_0) [\Z(\bH_0),F']\ \ \text{ for any } x_0\in\pi\inv(x).
	\end{eqnarray*}

 Note that, in contrast to the other two maps, $\Theta_{F'}$ is independent of the choice of $s$ in $\cC^{F'}$. 
Note also that if moreover $F'\in \uE^+$ and denoting by $\uE_\cC$ the stabilizer of $\cC$ in $\uE$, then $\Theta_{F'}$ is $\uE_\cC$-equivariant by its definition (since $\uE$ commutes with $F'$). 	
	
\medskip

Let us show Theorem B (see also the third paragraph of the proof of \cite[6.17]{T14} for related considerations). We consider the above with $F'=F_0\in \uE$ and $s$ being denoted by $t$. We are assuming that
	$|A_\bH(t)|=4=|\Z(\bH_0)|$. The injection $A_\bH(t)\hookrightarrow\Z(\bH_0)$ is then an $\spa{F_0}$-equivariant isomorphism and therefore $\Theta_{F_0}$ is onto. So $1=x_0\inv F_0(x_0) [\Z(\bH_0),F_0]$ for some $x_0\in\bH_0$ with $\pi(x_0)\in\cC^{F_0}$. Rewriting this as $x_0\inv F_0(x_0)z\inv F_0(z)=1$ for some $z\in\Z(\bH_0)$, we get $x'_0:=x_0z\in \bH_0^{F_0}$ and
	$\pi(x_0')=\pi(x_0)\in\cC$. Note also that the equivariance and injectivity of $\Theta_{F_0}$ imply that $[\pi(x_0)]_{\bH^{F_0}}$ is $\gamma$-stable. This shows all claims of Theorem B.
	\medskip
	
	We now prove the corollaries. If there is some $s\in \cC^{F_0}$ such that ${[s]_{\bH^{F_0}}}$ is $\gamma$-stable then Theorem A with $k=1$ and $\sigma '$ being here $\gamma$ clearly gives the claims of Corollary~\ref{1stCase}.
	
\medskip
\noindent{\it Proof of Corollary~\ref{2ndCase}.}{}
The action of $\uE$ on $\Z(\bH_0)$ is easy to deduce from the description of $\Z(\bH_0)$ (see  \cite[Table 1.12.6]{GLS3}, \cite[2.9]{TypeD1}). In particular $\Cent_{\Z(\bHnull)}(\gamma)=\spa{h_0}$, a subgroup of ${\Z(\bHnull)}$ of order 2.

	By assumption, no $\HFnull$-class contained in $\cC^{F_0}$ is $\gamma$-stable, so the $\HFnull$-conjugacy classes contained in $\cC$ are mapped via $\Theta_{F_0}$ to elements in $\Z(\bH_0)_{F_0}$ not fixed by $\gamma$. Then $\Z(\bH_0)_{F_0}$ can't have order 1 or 2, so it implies the following\begin{lem}\label{3.3} Keeping the assumptions of \Cref{2ndCase},
		$F_0$, and therefore also $F$, act trivially on $\Z(\bH_0)$.
	\end{lem}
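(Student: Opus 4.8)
The plan is to read off the result from the order of $\Z(\bH_0)$. In type $\mathsf{D}_l$ one has $|\Z(\bH_0)|=4$ (see \cite[Table 1.12.6]{GLS3}), so the quotient group $\Z(\bH_0)_{F_0}=\Z(\bH_0)/[\Z(\bH_0),F_0]$ has order $1$, $2$ or $4$, and $|\Z(\bH_0)_{F_0}|=4$ is equivalent to $[\Z(\bH_0),F_0]=1$, i.e. to $F_0$ acting trivially on $\Z(\bH_0)$. So it suffices to exclude orders $1$ and $2$, and this is precisely what the discussion preceding the lemma does: by Lang's theorem $\cC^{F_0}\neq\emptyset$, hence $\cC$ contains at least one $\HFnull$-class, and the injective, $\uE_\cC$-equivariant map $\Theta_{F_0}$ sends it to some element of $\Z(\bH_0)_{F_0}$. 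Under the hypothesis of \Cref{2ndCase} no $\HFnull$-class contained in $\cC^{F_0}$ is $\gamma$-stable, so — by injectivity of $\Theta_{F_0}$ together with its $\gamma$-equivariance ($\gamma\in\uE_\cC$ since $\gamma$ fixes $\cC$) — that element is not fixed by $\gamma$. But a group of order $\le 2$ has only the identity automorphism, so $\gamma$ would fix $\Z(\bH_0)_{F_0}$ pointwise if it had order $1$ or $2$; contradiction.

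Therefore $|\Z(\bH_0)_{F_0}|=|\Z(\bH_0)|$, hence $[\Z(\bH_0),F_0]=1$, i.e. $F_0$ centralises $\Z(\bH_0)$. Since $F=F_0^{2k}$ is a power of $F_0$, it too acts trivially on $\Z(\bH_0)$, which proves the lemma.

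The only point needing care — and it has already been set up before the lemma — is the equivalence between ``$[x]_{\HFnull}$ is $\gamma$-stable'' and ``$\Theta_{F_0}([x])$ is fixed by $\gamma$'', which rests on the injectivity and $\uE_\cC$-equivariance of $\Theta_{F_0}$ (the latter holding because $\uE$ commutes with $F_0$ and $\gamma$ stabilises $\cC$). Granting that, there is no real obstacle: the lemma reduces to the elementary observation that a proper quotient of a group of order $4$ has order at most $2$ and hence no nontrivial automorphism.
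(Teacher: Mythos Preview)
Your argument is correct and is essentially identical to the paper's own proof: the paper likewise observes that the $\HFnull$-classes in $\cC^{F_0}$ are sent by the injective, $\gamma$-equivariant map $\Theta_{F_0}$ to elements of $\Z(\bH_0)_{F_0}$ not fixed by $\gamma$, and concludes that $\Z(\bH_0)_{F_0}$ cannot have order $1$ or $2$, hence $F_0$ acts trivially on $\Z(\bH_0)$. You have merely spelled out a few details (Lang's theorem for $\cC^{F_0}\neq\emptyset$, the triviality of automorphisms of groups of order at most $2$) that the paper leaves implicit.
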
  We also get  that the image of $\Theta_{F_0}$, being $\gamma$-stable, is indeed equal to  the whole $\Z(\bH_0)\setminus\spa{h_0}$. This is of order 2, so $|A_\bH(s)_{F_0}|=|A_\bH(s)_{}|=2$ for any $s\in\cC^{F_0}$ by the diagram defining  $\Theta_{F_0}$ from $\omega_s$. The equality $|A_\bH(s)_{}|=2$ of course holds for any $s\in\cC$. We have $\gamma\omega_s\gamma =\omega_{\gamma(s)}$ and the image of $\omega_s$ depends only on the conjugacy class of $s$ (by the same formula for inner automorphisms), so $\gamma(\cC)=\cC$ implies that $\omega_s(A_\bH(s))$ is $\gamma$-stable of order 2 for any $s\in\cC$. Then indeed $\omega_s(A_\bH(s))=\spannh$, which amounts to part (ii) once the other parts are checked. 
	
We prove the other statements (i), (iii) and (iv) in several steps, first for some $s\in \cC^{F_0}$ and then for $t\in \cC^F$ via an application of Theorem A and Lemma~\ref{omni}. 
	
For the following we fix some $x\in \cC\cap \bT$ and the groups $\wc  B_\uE\leq \bH \uE$ and $\wc B:=\wc B_\uE \cap \bH$ with $\Cent_{ \bH \uE }(x)=\Cent_\bH^\circ (x)\rtimes \wc B_\uE$ from \Cref{prop_wcB}.

	Let us take $s\in \cC^{F_0}$. Let $s_0\in\pi\inv (s)$ and $h':=s_0\inv F_0(s_0)$. Note that $\Theta_{F_0}([s]_{\bH^{F_0}})=h'\in \Z(\bH_0)\setminus\spa{h_0}$ by the above. 
	This ensures $[s_0,F_0]\in \Z(\bH_0)\setminus \spannh$  for every $s_0\in\pi\inv(s)$. Defining $F'_0:=F_0$ in that case, we get part (iii) for that $s$.
	
	We turn to the question of actually finding $\gamma '$ in $\Cent_{ \bH \uE }(s)^F$ and proving (i) and (iv). In order to apply Theorem A with $\si '=\gamma$, we need to check that $[s]_\HF$ is $\gamma$-stable.
	
 Since $F=F_0^{2k}$ and $F_0(s_0)=s_0h'$ with $h'\in\Z(\bH_0)$, we have $$\Theta_F([s]_\HF)=s_0\inv F_0^{2k}(s_0)=s_0\inv F_0(s_0) F_0(s_0\inv F_0(s_0))\cdots F_0^{2k-1}(s_0\inv F_0(s_0))=(h')^{2k}$$ thanks to \Cref{3.3}. 
 Squares in $\Z(\bH_0)$ are $\uE$-fixed since they form a stable subgroup of order 1 or 2. So $\Theta_F([s]_\HF)$ is $\gamma$-fixed and by equivariance of $\Theta_F$, this implies that $[s]_\HF$ is $\gamma$-stable. Note that it is also $F_0$-stable since $F_0(s)=s$. On the other hand $\cC^F/\HF\text{-conj}$ is of cardinality $|A(s)_F|=|A(s)|=2$ as seen before and using 
 \Cref{3.3} again. Since $\cC^F$ is $\spa{F_0,\gamma}$-stable and $[s]_\HF$ is $\spa{F_0,\gamma}$-stable it implies the following \begin{lem}\label{3.4}
 Under the assumptions of \Cref{2ndCase}	$\spa{F_0,\gamma}${ acts trivially on   }$\cC^F/\HF\rm{-conj}$.
 \end{lem}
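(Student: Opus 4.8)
The plan is to observe that the statement is now a short counting argument resting on the facts assembled in the paragraphs above, so little genuinely new work remains. First I would make precise that $\spa{F_0,\gamma}$ actually acts on the finite set $\cC^F/\HF\text{-conj}$. Since $\uE$ is abelian ($F_p$ commutes with the graph automorphism $\gamma$) and $F=F_0^{2k}$ is a power of $F_0$, both $F_0$ and $\gamma$ commute with $F$ inside $\bH\uE$; hence each of them normalizes $\HF$ and carries $\HF$-conjugacy classes to $\HF$-conjugacy classes. As $F_0(\cC)=\gamma(\cC)=\cC$ and $\cC^F$ is $\spa{F_0,\gamma}$-stable (again because $F_0$ and $\gamma$ commute with $F$), this restricts to an action of $\spa{F_0,\gamma}$ on the set of $\HF$-classes contained in $\cC^F$.

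Next I would feed in the two quantitative inputs established just before the statement. On the one hand the parametrization recalled above, $\cC^F/\HF\text{-conj}\ \stackrel{\sim}{\longleftrightarrow}\ A_\bH(s)_F$, together with $|A_\bH(s)_F|=|A_\bH(s)|=2$ — which uses \Cref{3.3} and the computation $\omega_s(A_\bH(s))=\spannh$ — shows that $\cC^F/\HF\text{-conj}$ has exactly two elements. On the other hand, for a chosen $s\in\cC^{F_0}$ (nonempty by Lang's theorem since $F_0(\cC)=\cC$, and $s\in\cC^F$ because $F$ is a power of $F_0$), the class $[s]_\HF$ is fixed by $F_0$ — trivially, as $F_0(s)=s$ — and is fixed by $\gamma$, because $\Theta_F([s]_\HF)=(h')^{2k}$ is a square in $\Z(\bH_0)$, hence lies in the $\uE$-stable subgroup of squares and is $\gamma$-fixed, while $\Theta_F$ is injective and $\uE_\cC$-equivariant.

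Finally I would conclude: an action of a group on a two-element set that fixes one of its two points is trivial, since the stabilizer of a point in $\Sym_2$ is trivial. Applied to the $\spa{F_0,\gamma}$-action on the two-element set $\cC^F/\HF\text{-conj}$, which fixes $[s]_\HF$, this yields the claim. There is no real obstacle at this stage: the substantive work — showing that $\cC^F/\HF\text{-conj}$ has exactly two elements and that $[s]_\HF$ is $\gamma$-stable — has already been carried out above, and the lemma is just the bookkeeping that records it. The only point deserving a line of care is confirming $\cC^{F_0}\neq\emptyset$, so that a witness class $[s]_\HF$ with the required stability is available.
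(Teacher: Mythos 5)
Your proposal is correct and follows essentially the same route as the paper: the class $[s]_\HF$ for $s\in\cC^{F_0}$ is $F_0$-stable trivially and $\gamma$-stable because $\Theta_F([s]_\HF)=(h')^{2k}$ is a square in $\Z(\bH_0)$ (using Lemma~\ref{3.3} and the injectivity and equivariance of $\Theta_F$), while $\cC^F/\HF$-conjugacy has exactly two elements since $|A_\bH(s)_F|=|A_\bH(s)|=2$, so fixing one of the two classes forces the whole action of $\spa{F_0,\gamma}$ to be trivial. Your added care about why the action is well defined and why $\cC^{F_0}\neq\emptyset$ (Lang's theorem) is sound and only makes explicit what the paper leaves implicit.
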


	 We can now apply Theorem A to $s$ with $i=2k$, $\sigma '=\gamma$. Taking also $F'_0=F_0$, this gives us a $\sigma$ that we call $\gamma '$ and a decomposition $\Cent_\bH(s)=\Cent_\bH^\circ(s)\rtimes \wc A(s)$ satisfying our part (i) and $[F_0,\gamma ']\in\wc A(s)$. 
	 
	 Let us check that $[F_0,\gamma ']\neq 1$. Remembering that $\bH\uE$ acts on $\bH_0$, it suffices to show that ${[\gamma ',F_0]}(s_0)\neq s_0$. Recall that $F_0(s_0)=s_0h'$ for some $h'\in \Z(\bH_0)\setminus\spa{h_0}$ so that $\gamma '(h')=\gamma '{}\inv (h')=\gamma(h')=h_0h'$ since $\Z(\bH_0)$ is a group of order $4$ in which the centralizer of $\gamma$ is the subgroup of order $2$ generated by $h_0$. 
	 Let $z\in \Z(\bH_0)$ such that  $\gamma '(s_0)=s_0z$. Note that $z$ exists since $\gamma '(s)=s$.
	 Using also \Cref{3.3} we get 
	 \begin{align*}
	 {[\gamma ',F_0]}(s_0)&=\gamma 'F_0\gamma '{}\inv(s_0h'{}\inv)=\gamma 'F_0(s_0\gamma '{}\inv(z\inv)h_0h'{}\inv)=\\
	 &=\gamma '(s_0\gamma '{}\inv(z\inv)h_0)=s_0h_0\neq s_0.
	 \end{align*}
	 
	 This implies our claim that $[F_0,\gamma ']\neq 1$. (Another proof is possible with the three subgroups Lemma.) 
	 Since $\wc A(s)$ is a group of order 2 and $[F_0,\gamma ']\in \wc A(s)$, $[F_0,\gamma ']$ generates $\wc A(s)$. This gives (iv) in that case.
	 
	 Let us recall however how Theorem A is deduced from Lemma~\ref{omni} in our case (with $\si '$ being $\gamma$). We first get an inner automorphism of $\bH\uE$ induced by an element of $\bH$, namely $\iota_s\colon \bH\uE\to\bH\uE$ , such that $\iota_s(s)=x$, $f_s:=\iota_s(F_0)\in\wc B_\uE$ and then $\wc A(s):=\iota_s\inv(\wc B)$  satisfies $\Cent_{ \bH  }(s)=\Cent_\bH^\circ (s)\rtimes \wc A(s)$. Finally we take some $g_s\in \wc B_\uE\cap \bH^F\gamma$ and define $\gamma ':= \iota_s\inv(g_s)$.

	  Before turning to the general case of some $t\in\cC^F$, let us apply now $\iota_s$ to $F_0$ and $\gamma'$. From what has been said about $[F_0,\gamma ']$ we see that $[f_s,g_s]=\wc b$,  the generator of $\wc B$. Since $|\wc B|=|\AHs|=2$ and hence $\wc B\leq \Z(\wc B_\uE)$ this implies $$[f,g]=\wc b\text{ for every }f\in  \wc B_\uE\cap \bH F_0 \ (=\wc B f_s)\text{ and }g\in \wc B_\uE\cap \bH \gamma\ (=\wc B g_s). \eqno(3.5)$$ This is a general property since such an $s$ always exists.
	
	Let us now look at the general case where $t\in\cC^F\setminus \cC^{F_0}$. Using again Lemma~\ref{omni} but this time with $i=1$ (so that the $F_0$ of that lemma is our $F$), we get a conjugation $\iota_t\colon \bH\uE\to \bH\uE$ such that $\iota_t(t)=x$ and some other properties.
	
	We define $\wc A(t):=\iota_t\inv(\wc B)$ which then satisfies $\Cent_{ \bH  }(t)=\Cent_\bH^\circ (t)\rtimes \wc A(t)$. 
	Let $f:=f_s=\iota_s(F_0)\in \wc B_\uE\cap \bH F_0$ (see above), and take any $g\in  \wc B_\uE\cap \bH \gamma$. Let  $F_{0}':=\iota_t\inv(f)$ and $\gamma_{t}':=\iota_t\inv(g)$. By this construction $\wc A(t)$ is $\spa{F_{0}', \gamma_{t}'}$-stable and $F_{0}'(t)=\gamma_{t}'(t)=t$. 
	
	In the next step we show that $F$ centralizes in $\bH\uE$ both $\gamma'_t$ and $F'_0$. In applying \Cref{omni}, we can take $\tau\in\{F_0,\gamma\}$ and $h\in \bH^F$ such that $\tau(s)=s^h$ thanks to \Cref{3.4}. \Cref{omni}(c) tells us that $\iota_t(h \tau ) =c f'$ with  $c \in { \bH  } $, $f' \in {\wc B_\uE}$, and $[f',\iota_t(F)]=1$. Since $\iota_t(h\tau)\in \bH\tau$, this shows that $\iota_t(F)$ fixes an element of the set with two elements $\wc B_\uE\cap \bH\tau$. On the other hand $\iota_t(F)\in\wc B_\uE$ by \Cref{omni}(b), so it stabilizes $\wc B_\uE\cap \bH\tau$ (remember again
	that cosets mod $\bH$ are stabilized by inner automorphisms of $\bH\uE$) hence fixes any of its two elements. Taking the image by $\iota_t\inv$, we get that $F$ centralizes both $\iota_t\inv (\wc B_\uE\cap \bH F_0)$ and $\iota_t\inv (\wc B_\uE\cap \bH \gamma)$, hence both $F'_0$ and $\gamma '_t$. So we get that $F'_0\in \Cent_{\bH^F F_0}(s)$ and $\gamma '_t\in  \Cent_{\bH^F \gamma}(s)$. This ensures part (i) in that case.

Letting $t_0\in\pi\inv(t)$, $x_0\in\pi\inv(x)$, we have  $[t_0,F'_0]= \iota_t\inv( [x_0,f])= \iota_t\inv\iota_s( [s_0,F_0])=[s_0,F_0]$, since $\iota_s$ and $\iota_t$ are conjugations by elements of $\bH$ hence act trivially on $\Z(\bH_0)$. Then  $[t_0,F'_0]=  [s_0,F_0]\in \Z(\bH_0)\setminus \spannh$ as seen before. We then get (iii) for $t$.

	 Let now $\wc a$ be the generator of $\wc A(t)$ and hence $\wc a=\iota_t\inv (\wc b)$.  By (3.5), $[f,g]=\wc b$ and hence $[F_{0}', \gamma_{t}']=\iota_t\inv([f,g])= \iota_t\inv(\wc b)= \wc a$. This ensures part (iv) with $\gamma ':=\gamma'_t$.    	 
	 \qed
	 
\medskip\noindent{\bf Remark 3.6.}  
		Both cases covered by our Corollaries are non empty. It is clear for the first.
		
		 In order to check the relevance of the second corollary we use the notations of \cite[Not. 2.3]{TypeD1} to describe elements of the maximal torus $\bT_0$ of $\bH_0$.  Let $q$ be a power of $p$ with $q\equiv 1 \mod 4$. Let $F_q$ be the corresponding power of $F_p$. Let $\zeta, \varpi\in\FF^\times$ with $\zeta^{q^2-1}=\varpi^2=-1$. Recall that $\hh_\ul(\varpi):=\hh_{e_1}(\varpi)\dots \hh_{e_l}(\varpi)$ is an element of $\Z(\bH_0)\setminus\spa{h_0}$, not fixed by $\gamma$. Let $l=4$, $s_0=\hh_{e_2}(\varpi)\hh_{e_3}(\zeta)\hh_{e_4}(\zeta^q\varpi)\in \bT_0$. One has $\gamma(s_0)=s_0$. It is not difficult to see that with the equation $\zeta^{q^2}\varpi^q\varpi=\zeta$ one gets that $F_q(s_0)$ and $s_0\hh_{\underline 4}(\varpi)$ are $\NNN_{\bH _0}(\bT_0)$-conjugate, so we can write $F_q(s_0)=s_0^n\hh_{\underline 4}(\varpi)$ for some $n\in \NNN_{\bH _0}(\bT_0)$. Then $\Theta_{nF_q}([s]_{\bH^{nF_q}}) =\hh_{\underline 4}(\varpi)$ for $s:=\pi(s_0)$.
	
		By Lang's Theorem we can take $\iota\colon \bH\uE\to\bH\uE$ an inner automorphism induced by an element of $\bH$ such that $\iota(nF_q)=F_q$. Then $s':=\iota(s)\in \bH^{F_q}$ and $\Theta_{F_q}([s']_{\bH^{F_q}})=\hh_{\underline 4}(\varpi)$ which is not $\gamma$-stable. Then $[s']_{\bH^{F_q}}$ is not $\gamma$-stable while $s$ and $s'$ are in the same $\bH$-class.

	\printindex
\end{document}